\setlist{leftmargin=4mm}
\newcommand{\HH}{\mathscr{H}}
\newcommand{\RadSn}{\mathsf{Rad}_{\Sn}}
\newcommand{\RadX}{\mathsf{Rad}_{\XX}}
\newcommand{\diam}{\mathsf{diam}}
\newcommand{\Lip}{\mathsf{Lip}}
\newcommand{\ns}{\nabla^s}
\newcommand{\nn}{\nabla^0}
\renewcommand{\sf}{\mathrm{sf}}
\newcommand{\ZZ}{\mathscr{Z}}
\newcommand{\spindim}{2^{\lfloor n/2\rfloor}}
\newcommand{\Ctriv}{\underline{\mathbb{C}}^{\spindim}}
\newcommand{\Sn}{\mathbb{S}^n}
\newcommand{\XX}{\mathfrak{X}}
\newcommand{\inj}{\mathrm{inj}}
\newcommand{\tr}{\mathrm{tr}} 
\newcommand{\R}{\mathbb{R}}
\newcommand{\C}{\mathbb{C}}
\newcommand{\dX}{{\partial X}}
\newcommand{\D}{\mathsf{D}}
\newcommand{\Y}{\mathsf{Y}}
\newcommand{\eps}{\varepsilon}
\renewcommand{\phi}{\varphi}
\newcommand{\Ahat}{\mathsf{\hat A}}
\newcommand{\ch}{\mathsf{ch}}
\newcommand{\e}{\mathsf{e}}
\newcommand{\scal}{\mathrm{scal}}
\newcommand{\<}{\langle}
\renewcommand{\>}{\rangle}
\newcommand{\vol}{\mathsf{vol}}
\newcommand{\myicon}{$\,\,\,\triangleright$}
\newcommand{\listdisplay}[1]{\vspace{2mm}\hfill \( \displaystyle #1\)\hfill\mbox{}\vspace{1mm}}
\DeclareMathOperator{\ind}{\mathrm{ind}}
\DeclareMathOperator{\id}{\mathrm{id}}
\DeclareMathOperator{\dist}{\mathrm{dist}}
\newtheorem{theorem}{Theorem}
\newtheorem*{maintheorem}{Main Theorem}
\newtheorem{lemma}{Lemma}
\theoremstyle{definition}
\newtheorem{remark}{Remark}
\newtheorem{definition}{Definition} 
\newtheorem{example}{Example} 
\newtheorem{question}{Question} 
\begin{document}

\title{Dirac eigenvalues and the hyperspherical radius} 
\author{Christian B\"ar}
\address{Universit\"at Potsdam, Institut f\"ur Mathematik, 14476 Potsdam, Germany}
\email{\href{mailto:christian.baer@uni-potsdam.de}{christian.baer@uni-potsdam.de}}
\urladdr{\url{https://www.math.uni-potsdam.de/baer/}}

\begin{abstract} 
For closed connected Riemannian spin manifolds an upper estimate of the smallest eigenvalue of the Dirac operator in terms of the hyperspherical radius is proved.
When combined with known lower Dirac eigenvalue estimates, this has a number of geometric consequences.
Some are known and include Llarull's scalar curvature rigidity of the standard metric on the sphere, Geroch's conjecture on the impossibility of positive scalar curvature on tori and a mean curvature estimate for spin fill-ins with nonnegative scalar curvature due to Gromov, including its rigidity statement recently proved by Cecchini, Hirsch and Zeidler.
New applications provide a comparison of the hyperspherical radius with the Yamabe constant and improved estimates of the hyperspherical radius for Kähler manifolds, Kähler-Einstein manifolds, quaternionic Kähler manifolds and manifolds with a harmonic $1$-form of constant length.
\end{abstract}

\keywords{Hyperspherical radius, Dirac eigenvalue, scalar curvature, rigidity \`a la Llarull, Geroch's conjecture, mean curvature of spin fill-ins, Yamabe constant, Kähler manifolds, Kähler-Einstein manifolds, quaternionic Kähler manifolds, index theory, spectral flow, (generalized) Killing spinors}

\subjclass[2020]{Primary: 53C21, 53C24, 53C27; Secondary: 53C18, 53C26, 53C55, 58J20}

\date{\today}

\maketitle

\section{Introduction} 

The idea of hyperspherical radius of a closed connected orientable manifold $M$ goes back to Gromov and Lawson in \cite{GL} where it was used to prove Geroch's conjecture on the impossibility of positive scalar curvature on tori.
It is defined as the largest number $R>0$ such that $M$ still dominates the round sphere $\Sn(R)$ of radius $R$ in the sense that there exists a $1$-Lipschitz map $M\to\Sn(R)$ with nontrivial degree.
Here $n=\dim(M)$.

If, in addition, $M$ carries a spin structure, then the Dirac operator $\D$ is defined on $M$.
It is elliptic and selfadjoint and hence has discrete real spectrum.
We denote the smallest eigenvalue of $\D^2$ by $\lambda_1(\D^2)$.
The main result of the present paper is a bound for $\lambda_1(\D^2)$ in terms of the hyperspherical radius.

\begin{maintheorem}
Let $M$ be a connected closed Riemannian spin manifold of dimension $n\ge2$.
Denote the Dirac operator acting on spinor fields of $M$ by $\D$.
Then
\begin{equation}
\lambda_1(\D^2) \le \frac{n^2}{4\,\RadSn(M)^2}.
\label{eq.DiracRad}
\end{equation}
Equality holds in \eqref{eq.DiracRad} if and only if $M$ is isometric to $\Sn(R)$ with $R=\RadSn(M)$.
\end{maintheorem}

In fact, every $1$-Lipschitz map $f\colon M\to \Sn(R)$ with $R=\RadSn(M)$ and $\deg(f)\neq0$ turns out to be a smooth Riemannian isometry.

In general, the hyperspherical radius is hard to control.
This can even be made precise in complexity theoretical terms, see \cite{BGM}.
The main theorem has a number of applications which are obtained by combining it with known lower bounds for $\lambda_1(\D^2)$, which results in upper bounds for the hyperspherical radius.
Amongst these applications are some classically known theorems such as Llarull's scalar curvature rigidity of the standard metric of $\Sn$, the already mentioned Geroch's conjecture and an estimate for the mean curvature of spin fill-ins with nonnegative scalar curvature due to Gromov \cite{G4}, including its rigidity statement recently proved by Cecchini, Hirsch and Zeidler \cite{CHZ}.
New applications are concerned with an estimate between the hyperspherical radius and the Yamabe constant as well as improved estimates for the hyperspherical radius on manifolds the special structures, more precisely, on Kähler manifolds, Kähler-Einstein manifolds, quaternionic Kähler manifolds and manifolds admitting a nontrivial harmonic $1$-form of constant length. 

The paper is organized as follows.
In Section~\ref{sec.HypersphericalRadius} we recall the definition of the hyperspherical radius and discuss a number of its properties which follow directly from the definition.
In Section~\ref{sec.applications} we discuss applications of the main theorem.
The next two sections are devoted to the proof of the main theorem.
In Section~\ref{sec.LipschitzTwists} we provide some necessary technical tools and in Section~\ref{sec.spherrad} we carry out the proof.
One is not necessarily restricted to compare the manifold $M$ with round spheres.
In Section~\ref{sec.X} we describe a class of admissible comparison spaces in even dimensions for which an analogue to the main theorem can still be proved (Theorem~\ref{thm.DiracRadX}).
This class includes the sphere with a metric originating from boundaries of convex domains in Euclidean space.
The odd-dimensional case is left open in this more general setup.
We conclude the paper with a number of remarks and open questions in Section~\ref{sec.conclusion} and an appendix which contains a complete proof of \cite{HMZ}*{Theorem~6} for the convenience of the reader.

\medskip

\textit{Acknowledgments:} 
The author would like to thank Bernhard Hanke, Lennart Ronge and Rudolf Zeidler for many insightful discussions and ``SPP2026 - Geometry at Infinity'' funded by Deutsche Forschungsgemeinschaft for financial support.

\section{The hyperspherical radius}
\label{sec.HypersphericalRadius}

\begin{definition}\label{def.hyprad}
Let $M$ be a closed connected $n$-dimensional Riemannian manifold.
The \emph{hyperspherical radius} $\RadSn(M)$ of $M$ is the supremum of all numbers $R>0$ such that there exists a Lipschitz map $f\colon M\to S^n$ with Lipschitz constant $\Lip(f)\le 1/R$ and $\deg(f)\ne0$.
\end{definition}

Equivalently, $\RadSn(M)$ is the supremum of all numbers $R>0$ such that there exists a $1$-Lipschitz map $f\colon M\to \Sn(R)$ with nonzero degree.
Here $\Sn(R)$ denotes the round sphere of radius $R$.

\begin{remark}
The supremum in Definition~\ref{def.hyprad} is a maximum.
In other words, there exists a Lipschitz map $f\colon M\to \Sn$ with $\Lip(f)= 1/\RadSn(M)$ and $\deg(f)\neq0$.

Indeed, let $R_i\nearrow \RadSn(M)$ and let $f_i\colon M\to \Sn$ be Lipschitz maps with $\Lip(f_i)\le 1/R_i$ and $\deg(f_i)\neq0$.
By the Arzelà-Ascoli theorem, $f_i$ subconverges uniformly to a Lipschitz map $f$ with $\Lip(f)\le 1/\RadSn(M)$.
Since the (absolute value of) the degree of $f_i$ is at most $\Lip(f_i)^n$, the degrees of the $f_i$ are bounded.
Thus, after passing to a subsequence once more, all $f_i$ have the same nontrivial degree.
The limit map then also has this degree $\deg(f)\ne0$.
Since there can be no Lipschitz map $g\colon M\to \Sn$ with $\Lip(g)<1/\RadSn(M)$ and $\deg(g)\neq0$, we conclude:
there exists a Lipschitz map $f\colon M\to \Sn$ with $\Lip(f)=1/\RadSn(M)$ and $\deg(f)\neq0$.
\end{remark}

\begin{remark}
If $f\colon M\to\Sn$ is a Lipschitz map with $\Lip(f)<1/R$ for some $R<\RadSn(M)$, then we can approximate $f$ by a smooth map $\tilde{f}\colon M\to \Sn$ such that $\Lip(\tilde{f})<1/R$ and $\deg(\tilde{f})=\deg(f)$.
Therefore, we can equivalently define the hyperspherical radius as the supremum of all $R>0$ for which there exists a \emph{smooth} map $f\colon M\to S^n$ with $\Lip(f)\le 1/R$ and $\deg(f)\ne0$.
Then, however, the supremum may no longer be a maximum.
\end{remark}

\begin{remark}\label{rem.raddiam}
If $f\colon M\to S^n$ is a Lipschitz map with $\Lip(f)\le 1/R$, then the image of a ball $B(p,r)\subset M$ under $f$ must lie in the ball $B(f(p),\frac{r}{R})$.
Thus, if $\frac{\diam(M)}{R}< \diam(S^n) = \pi$, then $f$ is not surjective and hence has degree $0$.
This shows 
$$
\RadSn(M)\le \frac{\diam(M)}{\pi}.
$$
\end{remark}

\begin{remark}
Let $f\colon M\to\Sn$ be Lipschitz with $\Lip(f)=1/R$ and $\deg(f)\neq0$ where $R=\RadSn(M)$.
Without loss of generality assume $\deg(f)>0$.
Denote the volume forms of $M$ and $\Sn$ by $\vol_{M}$ and $\vol_{\Sn}$, respectively.
At points where $f$ is differentiable we have $f^*\vol_{\Sn} = \pm \mu_1\cdots \mu_n\cdot \vol_{M}$ where $\mu_1,\dots,\mu_n$ are the singular values of $df$ and the sign is positive if and only if $df$ preserves the orientation.
From $\mu_j\le 1/R$ we find
\begin{align}
\vol(M) 
&=
\int_M \vol_M \ge
R^n \int_M \mu_1\cdots\mu_n \vol_M \label{eq.VolEst1}\\
&\ge 
R^n \int_M f^*\vol_{\Sn} 
= 
R^n \deg(f) \int_{\Sn} \vol_{\Sn} \label{eq.VolEst2}\\
&\ge 
R^n\, \vol(\Sn).\label{eq.VolEst3}
\end{align}
This implies
\begin{equation}
\RadSn(M)^n \le \frac{\vol(M)}{\vol(\Sn)}.
\label{eq.VolumeEstimate}
\end{equation}
If we have equality in \eqref{eq.VolumeEstimate}, then we have equality in \eqref{eq.VolEst1}--\eqref{eq.VolEst3}.
Equality in \eqref{eq.VolEst1} implies that $\mu_1=\dots=\mu_n=R$ almost everywhere, i.e.\ $df$ is $R$ times a linear isometry at almost all points of $M$.
Equality in \eqref{eq.VolEst2} implies that $df$ preserves the orientation almost everywhere. 
Theorem~1 in \cite{KMS} now implies that $f\colon M\to S^n$ is a smooth Riemannian covering (up to rescaling).
By equality in \eqref{eq.VolEst3}, $f$ has degree $1$.
Hence, $f$ is an isometry, after rescaling the metric of $M$.
We have seen:

\emph{Equality holds in \eqref{eq.VolumeEstimate} if and only if $M$ is isometric to $\Sn(R)$.}

\end{remark}

\begin{remark}\label{rem.rescaleRad}
The hyperspherical radius behaves like a radius under rescaling of the metric.
Namely, for any constant $\lambda>0$ we have
$$
\RadSn(M,\lambda^2g) = \lambda\cdot\RadSn(M,g).
$$
\end{remark}

\begin{example}
Since $f=\id\colon \Sn\to \Sn$ has degree $1$ and Lipschitz constant $\Lip(\id)=1$, we have $\RadSn(\Sn)\ge1$.
By Remark~\ref{rem.raddiam}, we also have the inverse inequality.
Thus $\RadSn(\Sn)=1$.
Remark~\ref{rem.rescaleRad} now implies
$$
\RadSn(\Sn(R)) = R.
$$
\end{example}

\begin{remark}
Let $M$ be a connected closed orientable manifold and let $g_1$ and $g_0$ be two Riemannian metrics on $M$.
Then, if $g_1\ge g_0$ in the sense of bilinear forms, we have for any Lipschitz map $\Lip(f\colon (M,g_1)\to \Sn)\le\Lip(f\colon (M,g_0)\to \Sn)$.
Hence 
$$
\RadSn(M,g_1)\ge\RadSn(M,g_0).
$$
\end{remark}

\begin{remark}\label{rem.injrad}
Let $(M,g)$ be a connected closed oriented Riemannian, let $p\in M$ and let $0<\rho\le\inj(p)$ where $\inj(p)$ is the injectivity radius of $p$ in $M$.
We assume that the sectional curvature of $M$ satisfies $\sec\le\frac{\rho^2}{\pi^2}$ on the open ball $B(p,\rho)$.
We rescale the metric $g$ to $\tilde{g}=(\pi/\rho)^2g$.
With respect to this metric, we have $\tilde{B}(p,\pi)=B(p,\rho)$ and the sectional curvature satisfies $\widetilde{\sec}\le 1$ on $\tilde{B}(p,\pi)$.

Pick any point $q\in\Sn$ and a linear isometry $A\colon T_pM\to T_q\Sn$ (w.r.t.\ the metric $\tilde{g}$).
We define a map $f\colon (M,\tilde{g})\to \Sn$ by
\begin{equation*}
f(x):=
\begin{cases}
\exp_q^{\Sn}\circ A\circ (\widetilde{\exp}_p^{M})^{-1}(x), & \text{ if }x\in \tilde{B}(p,\pi),\\
-q, & \text{ else}.
\end{cases}
\end{equation*}
Jacobi field estimates together with the sectional curvature bound imply that $f$ is $1$-Lipschitz.
Therefore, $\RadSn(M,\tilde{g})\ge1$.
Remark~\ref{rem.rescaleRad} implies 
\begin{equation}
\RadSn(M,g) \ge \frac{\rho}{\pi} .
\label{eq.RadSninjrad}
\end{equation}
In particular, if we find a ball in $M$ on which the Riemannian exponential map is invertible and the sectional curvature satisfies $\sec\le0$, then the radius of this ball provides a lower bound for the hyperspherical radius of $M$.
\end{remark}



\section{Applications of the main theorem}
\label{sec.applications} 

Combining the main theorem with known lower bounds for the first Dirac eigenvalue leads to a number of interesting consequences some of which are known theorems.

\subsection{Llarull's theorem}
\label{subsec.appl.Llarull}
Friedrich proved in \cite{F}*{Thm.~A} that if a closed Riemannian spin manifold has scalar curvature $\scal\ge s_0>0$ then 
\begin{equation}
\lambda_1(\D^2) \ge \frac14\frac{n}{n-1}s_0 .
\label{eq.Friedrich}
\end{equation}
Combining this with the main theorem yields
\begin{equation}
\RadSn(M)^2 \le \frac{n(n-1)}{s_0}.
\label{eq.Llarull}
\end{equation}
Thus if $s_0\ge n(n-1)$, then $\RadSn(M)\le1$.
In other words, if the scalar curvature of $M$ is at least that of the standard sphere, then all Lipschitz maps $f\colon M\to S^n(1)$ of nontrivial degree must satisfy $\Lip(f)\ge1$.
Moreover, if $\Lip(f)=1$, we must have equality in the main theorem and hence $f\colon M\to S^n(1)$ is an isometry.
This proves:

\begin{theorem}[Llarull]
Let $M$ be a closed connected Riemannian spin manifold of dimension $n\ge2$.
Suppose that the scalar curvature of $M$ satisfies $\scal\ge n(n-1)$.

Then each $1$-Lipschitz map $f\colon M\to \Sn$ with nonzero degree is a Riemannian isometry.
\hfill\qed
\end{theorem}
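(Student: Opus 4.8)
The plan is to feed Friedrich's lower eigenvalue bound \eqref{eq.Friedrich} into the Main Theorem and then read off the rigidity from its equality clause. First I would specialize \eqref{eq.Friedrich} to the hypothesis $\scal\ge n(n-1)$, i.e.\ take $s_0=n(n-1)$, which gives $\lambda_1(\D^2)\ge \tfrac14\,\tfrac{n}{n-1}\,n(n-1)=\tfrac{n^2}{4}$. Comparing this with the upper bound $\lambda_1(\D^2)\le \tfrac{n^2}{4\,\RadSn(M)^2}$ of the Main Theorem forces $\RadSn(M)^2\le1$, that is $\RadSn(M)\le1$; this is precisely the case $s_0=n(n-1)$ of \eqref{eq.Llarull}.

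Next I would bring in the given map. A $1$-Lipschitz map $f\colon M\to\Sn$ with $\deg(f)\neq0$ is, in the language of Definition~\ref{def.hyprad}, an admissible map for the value $R=1$, so $\RadSn(M)\ge1$. Combined with the previous step this yields $\RadSn(M)=1$, whence the chain $\tfrac{n^2}{4}\le\lambda_1(\D^2)\le \tfrac{n^2}{4\,\RadSn(M)^2}=\tfrac{n^2}{4}$ collapses to an equality: equality holds in \eqref{eq.DiracRad}.

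Finally, the rigidity. Since $f$ is $1$-Lipschitz, has nonzero degree, and maps into $\Sn=\Sn(R)$ with $R=1=\RadSn(M)$, the addendum to the Main Theorem — that every such map is a smooth Riemannian isometry — applies verbatim and finishes the proof. Alternatively one may use the equality statement of the Main Theorem to conclude first that $M$ is isometric to $\Sn(1)=\Sn$, and then, separately, that $f$ itself is an isometry.

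I do not expect a genuine obstacle here: the entire argument is an assembly of results already at hand (the Main Theorem together with its equality discussion, and Friedrich's estimate), and it is exactly the reasoning sketched in the paragraph preceding the statement. The only point requiring a little care is the bookkeeping of normalizations — passing between ``$(1/R)$-Lipschitz into $\Sn$'' and ``$1$-Lipschitz into $\Sn(R)$'', and matching $\scal\ge n(n-1)$, the scalar curvature of the unit round sphere, with the value $R=1$ of the hyperspherical radius.
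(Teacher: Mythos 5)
Your proposal is correct and follows essentially the same route as the paper: Friedrich's estimate combined with the Main Theorem gives $\RadSn(M)\le 1$, the given map forces $\RadSn(M)\ge 1$ and hence equality in \eqref{eq.DiracRad}, and the addendum to the Main Theorem (every $1$-Lipschitz map of nonzero degree at the extremal radius is a smooth Riemannian isometry) yields the conclusion. Only your parenthetical alternative — first concluding $M\cong\Sn$ and then ``separately'' that $f$ is an isometry — would itself require that addendum, so the main line of your argument is the right one and matches the paper.
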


This theorem goes back to Llarull (\cite{Ll}*{Thm.~B}) where the map $f\colon M\to\Sn$ is assumed to be smooth.
We recover the version where $f$ may, a priori, be only Lipschitz.
This situation has been treated by Lee and Tam in \cite{LT}.
In that work, Ricci flow is employed to reduce to the smooth situation. 
A different approach is due to Cecchini, Hanke and Schick (\cite{CHS}) which so far seems to give only the even-dimensional case, but one can replace the $1$-Lipschitz condition by the weaker assumption that $df$ is nonincreasing on $2$-vectors.
This corresponds to Theorem~C in \cite{Ll}.
Both methods allow to also treat metrics of lower regularity on $M$.

\subsection{Geroch's conjecture}
We follow the conventions of \cite{GL} and call a connected closed differentiable manifold $M$ \emph{enlargeable} if for some Riemannian metric $g$ on $M$ and for every $R>0$ there exists a finite covering $\pi\colon \hat{M}\to M$ where $\hat{M}$ is spin and $\RadSn(\hat{M},\pi^*g)\ge R$.
Since any two Riemannian metrics on $M$ can be estimated by each other, we could equivalently demand the existence of the coverings for \emph{all} Riemannian metrics $g$ on $M$.

\begin{theorem}[Gromov-Lawson \cite{GL}*{Thm.~A}]
An enlargeable closed manifold does not admit a Riemannian metric of positive scalar curvature.
\end{theorem}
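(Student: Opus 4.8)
The plan is to argue by contradiction, feeding the hyperspherical-radius bound \eqref{eq.Llarull} into the definition of enlargeability. Suppose $M$ is a connected closed enlargeable manifold that nevertheless carries a Riemannian metric $g$ with $\scal_g>0$. If $\dim M=1$ there is nothing to prove, since scalar curvature vanishes identically in dimension one, so assume $n:=\dim M\ge2$. By compactness of $M$ there is a constant $s_0>0$ with $\scal_g\ge s_0$.

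Next I would apply enlargeability to the metric $g$ itself; this is legitimate because, as noted after the definition, the required coverings exist for all Riemannian metrics on $M$, not merely for one. Fix a number $R>0$ to be chosen below. There is a finite Riemannian covering $\pi\colon\hat M\to M$ with $\hat M$ a closed connected spin manifold and $\RadSn(\hat M,\pi^*g)\ge R$. Since $\pi$ is a local isometry and $M$ is compact, $\hat M$ is compact and $\pi^*g$ still satisfies $\scal_{\pi^*g}\ge s_0$ everywhere on $\hat M$.

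Now $\hat M$ meets the hypotheses of Subsection~\ref{subsec.appl.Llarull}: it is a closed connected Riemannian spin manifold of dimension $n\ge2$ with $\scal\ge s_0>0$. Friedrich's inequality \eqref{eq.Friedrich} gives $\lambda_1(\D^2)\ge\tfrac14\tfrac{n}{n-1}s_0$ on $\hat M$, and the main theorem then produces
\[
\RadSn(\hat M,\pi^*g)^2\le\frac{n(n-1)}{s_0},
\]
exactly as in \eqref{eq.Llarull}. Together with $\RadSn(\hat M,\pi^*g)\ge R$ this forces $R^2\le n(n-1)/s_0$. Choosing $R>\sqrt{n(n-1)/s_0}$ at the outset yields a contradiction, and the theorem follows.

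I do not expect a genuine obstacle here: once the main theorem is available the argument is the classical Gromov--Lawson scheme, with their index-theoretic vanishing on a twisted Dirac bundle replaced by the eigenvalue inequality \eqref{eq.DiracRad}. The only steps requiring (routine) verification are that a finite cover of a compact manifold is compact and that the scalar curvature lower bound survives pullback by a local isometry; connectedness and the spin condition are built into the definition of enlargeability. The real work is concentrated in the proof of the main theorem in Sections~\ref{sec.LipschitzTwists} and~\ref{sec.spherrad}.
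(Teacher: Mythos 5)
Your proposal is correct and is essentially the paper's own argument: pull back the scalar curvature bound to the spin covers provided by enlargeability (using the remark that the covers can be taken for any metric), apply Friedrich's inequality together with the main theorem to bound $\RadSn(\hat M,\pi^*g)$, and contradict the unboundedness required by enlargeability. Your additional care about the dimension-one case and the explicit choice of $R$ are routine elaborations of the same proof.
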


\begin{proof}
Assume that $M$ is enlargeable and has a Riemannian metric $g$ with positive scalar curvature.
By compactness, there exists a positive constant $s_0$ such that $\scal_{g}\ge s_0$.
Then for any spin covering $\pi\colon \hat{M}\to M$ we also have $\scal_{\pi^*g}\ge s_0$.
Estimate \eqref{eq.Llarull} yields $\RadSn(\hat{M},\pi^*g)\le \frac{n(n-1)}{s_0}$, contradicting enlargeability.
\end{proof}

The simplest examples of enlargeable manifolds are tori.
To see this, equip the torus with a flat metric, pass to coverings by larger tori and employ \eqref{eq.RadSninjrad}.
The nonexistence of positive scalar curvature metrics on tori answers a question attributed to Geroch.

\subsection{Mean curvature of spin fill-ins}\label{subsec.fillin}
Let $M$ be an $n$-dimensional closed Riemannian spin manifold.
Let $h\colon M\to\R$ be a smooth function.

\begin{definition}
A \emph{spin fill-in} of $(M,h)$ is a pair $(X,\Phi)$ where $X$ an $(n+1)$-dimensional compact connected Riemannian spin manifold with boundary and $\Phi\colon M\to\dX$ is a spin-structure preserving isometry such that $h=H_X\circ\Phi$.
Here $\dX$ carries the metric and spin structure induced by those of $X$ and $H_X$ is the unnormalized mean curvature of $\dX$ in $X$.
The sign convention is such that the boundary of an $(n+1)$-dimensional Euclidean ball of radius $R$ has mean curvature $\frac{n}{R}$. 

If the scalar curvature of $X$ is nonnegative, then we call $(X,\Phi)$ an \emph{NNSC spin fill-in} of $(M,h)$.
\end{definition}

Theorem~6 of \cite{HMZ} by Hijazi, Montiel, and Zhang says that if $(M,h)$ admits an NNSC spin fill-in then
\begin{equation}
\sqrt{\lambda_1(\D^2)} \ge \tfrac12 \min_M h.
\label{eq.HMZ}
\end{equation}

It has been pointed out that the original proof in \cite{HMZ} has a gap, see Remark~3.4 in \cite{CHZ}.
For the reader's convenience we provide a proof in the appendix, see Theorem~\ref{thm.HMZ}.
Combining this with the main theorem yields
$$
\min_M h \le \frac{n}{\RadSn(M)} .
$$

If we have equality in this estimate we must have equality in the main theorem, hence $M$ is isometric to a round sphere.
Moreover, we must have equality in \eqref{eq.HMZ}. 
By the equality part of Theorem~\ref{thm.HMZ}, any NNSC spin fill-in $X$ must be Ricci flat and $h$ must be constant (equal to $\frac{n}{2\RadSn(M)}$).

Theorems~4.1 and 4.2 in \cite{ST} by Shi and Tam then imply that $X$ is isometric to a Euclidean domain bounding a round sphere, hence to a Euclidean ball. 
We summarize:

\begin{theorem}[Gromov-Cecchini-Hirsch-Zeidler]
Let $M$ be an $n$-dimensional closed Riemannian spin manifold and let $h\colon M\to\R$ be a smooth function.
If $(M,h)$ admits an NNSC spin fill-in then
\begin{equation}
\min_M h \le \frac{n}{\RadSn(M)} .
\label{eq.GCHZ}
\end{equation}
If equality holds in \eqref{eq.GCHZ}, then any NNSC spin fill-in is isometric to a Euclidean ball. 
\hfill\qed
\end{theorem}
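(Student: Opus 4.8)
The plan is to obtain both assertions by combining the Main Theorem with the Hijazi--Montiel--Zhang boundary estimate \eqref{eq.HMZ} and then invoking the Shi--Tam rigidity theorems. I take two inputs as given: the Main Theorem (proved later in Sections~\ref{sec.LipschitzTwists}--\ref{sec.spherrad}), including its equality case, and Theorem~\ref{thm.HMZ} together with its equality discussion (proved in the appendix).

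For the inequality, I would fix an NNSC spin fill-in $(X,\Phi)$ of $(M,h)$. Since $\Phi\colon M\to\dX$ is a spin-structure preserving isometry and $h=H_X\circ\Phi$, estimate \eqref{eq.HMZ} applies and gives $\sqrt{\lambda_1(\D^2)}\ge\tfrac12\min_M h$, where $\D$ is the Dirac operator of $M$. The Main Theorem gives $\sqrt{\lambda_1(\D^2)}\le \tfrac{n}{2\,\RadSn(M)}$. Chaining the two inequalities yields $\tfrac12\min_M h\le\tfrac{n}{2\,\RadSn(M)}$, which is \eqref{eq.GCHZ}.

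For the rigidity statement, assume equality holds in \eqref{eq.GCHZ}. Then both inequalities in the chain are equalities. Equality in the Main Theorem forces $M$ to be isometric to $\Sn(R)$ with $R=\RadSn(M)$, so the boundary of any NNSC spin fill-in is the round sphere $\Sn(R)$. Equality in \eqref{eq.HMZ}, via the equality part of Theorem~\ref{thm.HMZ}, forces every NNSC spin fill-in $X$ to be Ricci flat --- in particular scalar flat --- and forces $h$ to be constant, hence equal to $\min_M h=\tfrac{n}{R}$. Thus $X$ is a compact connected scalar-flat spin manifold whose boundary is $\Sn(R)$ with constant (unnormalized) mean curvature $\tfrac{n}{R}$ in $X$ --- exactly the boundary data of the Euclidean $(n+1)$-ball of radius $R$. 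Applying Theorems~4.1 and~4.2 of \cite{ST} to this configuration identifies $X$ isometrically with that ball, completing the proof.

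The chain of inequalities is routine once both of its ingredients are available, so the substance --- and the main obstacle --- lies in the equality analysis. First one needs the strong rigidity of \eqref{eq.HMZ} asserted in Theorem~\ref{thm.HMZ}: the extremal boundary spinor must extend to a suitably constrained (generalized Killing) spinor on $X$ whose integrability conditions pin down $\mathrm{Ric}\equiv 0$ and the value of $h$; establishing this carefully --- and thereby repairing the gap in \cite{HMZ} noted in \cite{CHZ} --- is the content of the appendix. Second one needs that a compact scalar-flat spin manifold bounded by a round sphere carrying the Euclidean value of the mean curvature satisfies the hypotheses of the Shi--Tam rigidity, so that it is forced to be a flat Euclidean ball; this is supplied by \cite{ST}. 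With these two facts in hand, no further estimates are required.
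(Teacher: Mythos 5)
Your proposal follows essentially the same route as the paper: chain the Hijazi--Montiel--Zhang estimate \eqref{eq.HMZ} with the Main Theorem to get \eqref{eq.GCHZ}, then in the equality case use the rigidity of the Main Theorem (so $M\cong\Sn(R)$), the equality part of Theorem~\ref{thm.HMZ} (so $X$ is Ricci flat and $h$ constant), and Theorems~4.1 and~4.2 of \cite{ST} to identify $X$ with a Euclidean ball. The argument is correct as stated (indeed your constant $h=n/R$ is the right one), so nothing further is needed.
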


Inequality~\eqref{eq.GCHZ} can be found in Section~4.3 of Gromov's ``Four lectures on scalar curvature'' \cite{G4}.
The equality discussion has recently been carried out by Cecchini, Hirsch and Zeidler using different arguments in \cite{CHZ}*{Thm.~1.5}.

The remaining applications are new to the best of our knowledge.

\subsection{A conformal bound}
The positive lower bound on scalar curvature can be replaced by a positivity assumption on the Yamabe constant.

\begin{theorem}
Let $(M,g)$ be a closed connected Riemannian spin manifold of dimension $n\ge3$.
Suppose that the Yamabe constant $\Y(M,[g])$ of the conformal class of $g$ is positive.
Then
\begin{equation}
\RadSn(M)^2 \le \frac{n(n-1)\vol(M,g)^{2/n}}{\Y(M,[g])} .
\label{eq.RadConform}
\end{equation}
Equality holds if and only if $(M,g)$ is isometric to a round sphere.
\end{theorem}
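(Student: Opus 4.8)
The plan is to combine the main theorem with Hijazi's conformal lower bound for the smallest Dirac eigenvalue. Recall that for a closed spin manifold of dimension $n\ge3$ one has Hijazi's inequality
\[
\lambda_1(\D^2)\ \ge\ \frac{n}{4(n-1)}\,\mu_1,
\]
where $\mu_1$ is the smallest eigenvalue of the Yamabe operator $L_g=\frac{4(n-1)}{n-2}\Delta_g+\scal_g$ (with $\Delta_g$ the non-negative Laplacian); the restriction $n\ge3$ enters precisely here. The second ingredient compares $\mu_1$ with the Yamabe constant. Writing the Rayleigh quotient for $L_g$ and applying Hölder's inequality $\|\phi\|_{L^2}^2\le\|\phi\|_{L^{2n/(n-2)}}^2\,\vol(M,g)^{2/n}$ — noting that the hypothesis $\Y(M,[g])>0$ forces $\mu_1>0$, so that the numerator $\int_M\phi\,L_g\phi\,d\vol_g$ is positive for every $\phi\neq0$ and the estimate therefore goes in the right direction — one obtains
\[
\mu_1\cdot\vol(M,g)^{2/n}\ \ge\ \Y(M,[g]).
\]

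Concatenating these with \eqref{eq.DiracRad} yields
\[
\frac{n^2}{4\,\RadSn(M)^2}\ \ge\ \lambda_1(\D^2)\ \ge\ \frac{n}{4(n-1)}\,\mu_1\ \ge\ \frac{n}{4(n-1)}\cdot\frac{\Y(M,[g])}{\vol(M,g)^{2/n}},
\]
and solving for $\RadSn(M)^2$ gives \eqref{eq.RadConform}.

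For the rigidity statement, if equality holds in \eqref{eq.RadConform} then every inequality in the chain above is an equality, in particular $\lambda_1(\D^2)=\frac{n^2}{4\RadSn(M)^2}$, so the equality part of the main theorem shows $(M,g)$ is isometric to $\Sn(R)$ with $R=\RadSn(M)$. Conversely, if $(M,g)=\Sn(R)$ then $\RadSn(M)=R$, $\vol(M,g)=R^n\vol(\Sn)$, and since the Yamabe constant is a conformal invariant (hence unchanged by homotheties) and is attained by the round metric on the sphere, $\Y(\Sn,[g])=n(n-1)\vol(\Sn)^{2/n}$; substituting, the right-hand side of \eqref{eq.RadConform} becomes $R^2=\RadSn(M)^2$, so equality holds.

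The delicate point is the passage from $\mu_1$ to $\Y(M,[g])$: it is the positivity of the Yamabe constant that makes the Hölder step effective. For the converse direction of the rigidity one additionally uses that the round sphere realizes its Yamabe constant, which is the (resolved) Yamabe problem on $\Sn$.
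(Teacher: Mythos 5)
Your proposal is correct and follows essentially the same route as the paper: combine the main theorem with Hijazi's conformal lower bound for $\lambda_1(\D^2)$, then settle equality via the main theorem's rigidity in one direction and the value $\Y(\Sn)=n(n-1)\vol(\Sn)^{2/n}$ in the other. The only cosmetic difference is that the paper quotes Hijazi's inequality directly in the form $\lambda_1(\D^2)\ge\frac{n}{4(n-1)}\Y(M,[g])/\vol(M,g)^{2/n}$, whereas you derive that form from the Yamabe-operator eigenvalue bound together with the (correct) Hölder comparison $\mu_1\vol(M,g)^{2/n}\ge\Y(M,[g])$.
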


\begin{proof}
Hijazi has shown in \cite{H}*{Thm.~1} that 
$$
\lambda_1(\D^2) \ge \frac{n}{4(n-1)}\frac{\Y(M,[g])}{\vol(M,g)^{2/n}} .
$$
Combining this with the main theorem yields \eqref{eq.RadConform}.

If we have equality in \eqref{eq.RadConform}, then we have equality in the main theorem and hence $(M,g)$ is isometric to a round sphere.

Conversely, if $(M,g)$ is isometric to a round sphere, we can arrange by rescaling that $M=\Sn$.
Then $\RadSn(M)=1$ and $\Y(M)=n(n-1)\vol(M)^{2/n}$.
Thus we have equality in \eqref{eq.RadConform}.
\end{proof}



\subsection{Manifolds with special structures}
Suitable additional conditions on $M$ can lead to improvements in \eqref{eq.Friedrich} and hence in \eqref{eq.Llarull}.
We present three instances of this situation.
We start with Kähler manifolds.

\begin{theorem}
Let $M$ be a closed connected Kähler spin manifold of complex dimension $m\ge2$.
Assume its scalar curvature satisfies $\scal\ge s_0$ for some positive constant $s_0$.
Then
$$
\RadSn(M)^2 <
\begin{cases}
\frac{m^3}{m+1}\frac{4}{s_0}, \text{ if $m$ is odd},\\
m(m-1)\frac{4}{s_0}, \text{ if $m$ is even},\\
\frac{m^3}{m+2}\frac{4}{s_0}, \text{ if $m\ge4$ is even and $M$ is Kähler-Einstein}.
\end{cases}
$$
\end{theorem}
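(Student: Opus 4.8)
The plan is to feed Kirchberg's lower eigenvalue estimates for Kähler spin manifolds into the main theorem, exactly in the spirit of the applications above that used Friedrich's and Hijazi's bounds. Recall that Kirchberg proved that a closed Kähler spin manifold $M$ of complex dimension $m$ with $\scal\ge s_0>0$ satisfies
\[
\lambda_1(\D^2)\ge
\begin{cases}
\dfrac{m+1}{4m}\,s_0, & \text{$m$ odd},\\
\dfrac{m}{4(m-1)}\,s_0, & \text{$m$ even},
\end{cases}
\]
and that in the Kähler-Einstein case with $m\ge4$ even the sharper bound $\lambda_1(\D^2)\ge\frac{m+2}{4m}\,s_0$ holds. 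Since the real dimension of $M$ is $n=2m$, inequality \eqref{eq.DiracRad} reads $\lambda_1(\D^2)\le m^2/\RadSn(M)^2$. First I would combine these: in each of the three cases, chaining the appropriate Kirchberg bound with \eqref{eq.DiracRad} and solving for $\RadSn(M)^2$ produces exactly the three right-hand sides in the statement, but a priori only with ``$\le$''.

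To upgrade to strict inequality I would argue by contradiction. Suppose equality held, say in the odd case; then
\[
\tfrac{m+1}{4m}\,s_0\le\lambda_1(\D^2)\le\frac{m^2}{\RadSn(M)^2}=\tfrac{m+1}{4m}\,s_0,
\]
so $\lambda_1(\D^2)=\frac{n^2}{4\RadSn(M)^2}$, i.e.\ equality holds in \eqref{eq.DiracRad}. By the rigidity part of the main theorem, $M$ would then be isometric to a round sphere $\Sn(R)$ with $n=2m\ge4$. But $S^{2m}$ with $m\ge2$ admits no Kähler metric: one has $H^2(S^{2m};\R)=0$, whereas on a compact Kähler manifold the Kähler class is a nonzero element of $H^2$ (its $m$-th power integrates to a positive multiple of the volume) --- a contradiction. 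The identical argument, with $\frac{m+1}{4m}$ replaced by $\frac{m}{4(m-1)}$ or by $\frac{m+2}{4m}$, handles the even and the Kähler-Einstein cases, so all three estimates are strict.

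Since Kirchberg's estimates are quoted from the literature, no new analytic work is needed; the only points requiring care are the passage between complex and real dimension ($n=2m$) when invoking \eqref{eq.DiracRad} and checking that each of the three Kirchberg constants yields precisely the claimed bound. The rigidity step, which is what turns ``$\le$'' into ``$<$'', is then immediate from the main theorem together with the elementary topological fact that even-dimensional spheres of real dimension at least $4$ carry no Kähler structure. I do not anticipate any serious obstacle: the substance of the theorem is the main theorem plus Kirchberg, and everything else is bookkeeping.
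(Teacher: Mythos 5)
Your proposal is correct and follows the paper's own argument essentially verbatim: combine Kirchberg's three lower bounds with the main theorem (with $n=2m$), and rule out equality via the rigidity statement, since a round sphere of real dimension $2m\ge4$ carries no Kähler structure. The only difference is that you spell out the cohomological reason the sphere is not Kähler, which the paper leaves implicit.
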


\begin{proof}
Kirchberg showed in \cite{K1}*{Thm.~A}, \cite{K2}*{Thm.~2} and in \cite{K3}*{Cor.~4.1} that 
$$
\lambda_1(\D^2) \ge 
\begin{cases}
\frac{m+1}{4m}s_0, \text{ if $m$ is odd},\\
\frac{m}{4(m-1)}s_0, \text{ if $m$ is even},\\
\frac{m+2}{4m}s_0, \text{ if $m\ge4$ is even and $M$ is Kähler-Einstein}.
\end{cases}
$$
Combining this with the main theorem yields the weak version of the inequality in the statement.
Equality cannot occur because $M$ would then have to be isometric to $S^n$ which is not a Kähler manifold.
Therefore, the inequalities are strict.
\end{proof}

Next we consider quaternionic Kähler manifolds.
Replacing Kirchberg's estimates by Kramer, Semmelmann and Weingart's estimate (\cite{KSW}*{Thm.~1.1})
$$
\lambda_1(\D^2) \ge \frac{n+12}{4(n+8)}s_0
$$
the same reasoning yields:

\usetagform{simple}
\begin{theorem}
Let $M$ be a closed connected quaternionic Kähler spin manifold of real dimension $n\ge8$.
Assume its scalar curvature satisfies $\scal\ge s_0$ for some positive constant $s_0$.
Then
\begin{equation}
\RadSn(M)^2 < \frac{n^2(n+8)}{n+12}\frac{1}{s_0}.
\tag{$\qed$}
\end{equation}
\end{theorem}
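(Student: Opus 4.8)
The plan is to follow verbatim the template already used above for the Kähler and Kähler--Einstein cases: feed the appropriate holonomy-adapted lower bound for $\lambda_1(\D^2)$ into the Main Theorem, solve the resulting inequality for $\RadSn(M)^2$, and then promote the non-strict inequality to a strict one via the rigidity clause of the Main Theorem. No new ingredients beyond the cited Dirac eigenvalue estimate are needed; the whole statement is a corollary.

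Concretely, I would first invoke the eigenvalue estimate of Kramer, Semmelmann and Weingart \cite{KSW}*{Thm.~1.1}: on a closed connected quaternionic Kähler spin manifold of real dimension $n \ge 8$ with $\scal \ge s_0 > 0$ one has $\lambda_1(\D^2) \ge \frac{n+12}{4(n+8)} s_0$. (The positivity of $s_0$ and the spin hypothesis are exactly the standing assumptions of \cite{KSW}; compactness is assumed as well, so there is nothing to check.) Combining this with the Main Theorem, which gives $\lambda_1(\D^2) \le \frac{n^2}{4\,\RadSn(M)^2}$, I obtain
\[
\frac{n+12}{4(n+8)}\,s_0 \;\le\; \lambda_1(\D^2) \;\le\; \frac{n^2}{4\,\RadSn(M)^2},
\]
and rearranging for $\RadSn(M)^2$ yields the non-strict form $\RadSn(M)^2 \le \frac{n^2(n+8)}{n+12}\cdot\frac{1}{s_0}$.

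It remains to exclude equality. If equality held in the displayed chain, then in particular $\lambda_1(\D^2) = \frac{n^2}{4\,\RadSn(M)^2}$, so the equality clause of the Main Theorem would force $M$ to be isometric to a round sphere $\Sn(R)$ with $R = \RadSn(M)$. But $\Sn$ carries no quaternionic Kähler structure once $n \ge 8$: its holonomy is $\mathrm{SO}(n)$, not contained in $\mathrm{Sp}(n/4)\cdot\mathrm{Sp}(1)$, and the only sphere that is quaternionic Kähler is $S^4 = \mathbb{HP}^1$, excluded by the dimension bound $n \ge 8$. This contradiction shows the inequality is strict, as claimed.

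I do not anticipate any real obstacle here; the argument is a routine concatenation of the Main Theorem with a known estimate. The only point that deserves a moment of attention is the rigidity step: one must be sure that the equality case of the Main Theorem produces an honest round sphere (it does, by the equality clause), and that this is genuinely incompatible with the quaternionic Kähler hypothesis in the stated dimension range, which it plainly is.
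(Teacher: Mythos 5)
Your proposal is correct and follows essentially the same route as the paper: it combines the Kramer--Semmelmann--Weingart bound $\lambda_1(\D^2)\ge\frac{n+12}{4(n+8)}s_0$ with the Main Theorem and rules out equality because the rigidity clause would force $M$ to be a round sphere, which is not quaternionic K\"ahler for $n\ge 8$. Your explicit holonomy remark only spells out what the paper leaves implicit in its ``same reasoning'' reference to the K\"ahler case.
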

\usetagform{default}

The existence of a nontrivial harmonic $1$-form \emph{of constant length} also leads to an improvement.
Moroianu and Ornea showed in \cite{MO}*{Thm.~1.1} that in this case Friedrich's estimate \eqref{eq.Friedrich} can be improved to 
\begin{equation*}
\lambda_1(\D^2) \ge \frac14\frac{n-1}{n-2}s_0 .
\label{eq.MO}
\end{equation*}
Equality cannot occur in the main theorem because $S^n$ does not admit nontrivial harmonic $1$-forms.
This proves:

\usetagform{simple}
\begin{theorem}
Let $M$ be a closed connected Riemannian spin manifold of dimension $n\ge3$ which admits a nontrivial harmonic $1$-form of constant length.
Suppose that the scalar curvature of $M$ satisfies $\scal\ge s_0$ for some positive constant $s_0$.
Then
\begin{equation}
\RadSn(M)^2 < \frac{n^2(n-2)}{n-1}\frac{1}{s_0}.
\tag{$\qed$}
\end{equation}
\end{theorem}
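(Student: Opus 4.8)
The plan is to follow verbatim the template used above for the Kähler and quaternionic-Kähler theorems: feed a sharpened lower bound for the first Dirac eigenvalue into the main theorem to obtain a non-strict bound on $\RadSn(M)$, and then upgrade it to a strict inequality by the observation that round spheres carry no nontrivial harmonic $1$-forms.

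First I would invoke the Moroianu--Ornea refinement of Friedrich's inequality \cite{MO}*{Thm.~1.1}: since $M$ is a closed Riemannian spin manifold of dimension $n\ge3$ admitting a nontrivial harmonic $1$-form of constant length and $\scal\ge s_0>0$, one has
\[
\lambda_1(\D^2) \;\ge\; \frac14\,\frac{n-1}{n-2}\,s_0 .
\]
Combining this with the main theorem's estimate $\lambda_1(\D^2)\le \frac{n^2}{4\,\RadSn(M)^2}$ gives
\[
\frac14\,\frac{n-1}{n-2}\,s_0 \;\le\; \frac{n^2}{4\,\RadSn(M)^2},
\]
which rearranges at once to the non-strict version $\RadSn(M)^2 \le \frac{n^2(n-2)}{(n-1)\,s_0}$.

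It then remains to exclude equality. If equality held above, then equality would hold in the main theorem, and hence, by its rigidity statement, $M$ would be isometric to a round sphere $\Sn(R)$. But $H^1(\Sn;\R)=0$ for $n\ge2$, so $\Sn(R)$ admits no nontrivial harmonic $1$-form, contradicting the hypothesis on $M$. Therefore the inequality is strict, as claimed.

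I do not anticipate a genuine obstacle at the level of this statement: all the analytic substance is carried by the main theorem (proved in Sections~\ref{sec.LipschitzTwists}--\ref{sec.spherrad}) and by the cited eigenvalue estimate. The only point deserving a moment's attention is that the equality case of the main theorem is precise enough to force $M\cong\Sn(R)$ — which it is — after which the topological obstruction $H^1(\Sn)=0$ closes the argument. Were one to aim for a self-contained treatment, the actual work would be reproving the Moroianu--Ornea estimate, which proceeds via a modified Lichnerowicz--Weitzenböck identity twisting the spin connection by the given harmonic $1$-form; this, however, lies outside the scope of the present application.
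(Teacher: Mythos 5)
Your proposal is correct and is essentially the paper's own argument: combine the Moroianu--Ornea bound $\lambda_1(\D^2)\ge\frac14\frac{n-1}{n-2}s_0$ with the main theorem, and rule out equality via the rigidity statement, since a round sphere admits no nontrivial harmonic $1$-form. The paper states this more tersely, but the logic is identical.
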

\usetagform{default}

\section{Spinors with Lipschitz twists}
\label{sec.LipschitzTwists}

We now turn to the proof of the main theorem.
The current section provides some preparation for the proof which is then carried out in the next section.

For a linear map $A$ between finite-dimensional Euclidean vector spaces we denote by $|A|_1$ its trace norm, by $|A|_2$ its Frobenius norm and by $|A|_\infty$ its spectral norm.
In other words, if $0\le\mu_1\le\mu_2\le \dots \le \mu_n$ are the singular values of $A$, repeated according to multiplicity, then $|A|_1=\mu_1 + \dots + \mu_n$, $|A|_2=\sqrt{\mu_1^2 + \dots + \mu_n^2}$, and $|A|_\infty=\mu_n$.

Throughout this section, let $M$ be a closed Riemannian spin manifold of dimension $n\ge2$.
Let $f\colon M\to S^n$ be a Lipschitz map.
We denote the spinor bundle of $M$ by $\Sigma M$ and the one of $S^n$ by $\Sigma S^n$.
The spinor bundles are Hermitian vector bundles of rank $2^{\lfloor n/2\rfloor}$.
Let $\nn$ be the connection on $\Sigma S^n$ induced by the Levi-Civita connection.
For $s\in\R$ define the connection
\[
\ns_X\psi := \nn_X\psi + s X\cdot \psi 
\]
on $\Sigma S^n$.
Here $X\cdot\psi$ denotes the Clifford product of the tangent vector $X$ with the spinor $\psi$.
All connections $\ns$ are metric with respect to the natural Hermitian scalar product on $\Sigma S^n$.
Spinors which are parallel w.r.t.\ $\ns$ are known as \emph{Killing spinors}.
Nontrivial Killing spinors exist on $S^n$ for $s=\frac12$ and $s=-\frac12$.
In these cases they trivialize the spinor bundle.
More precisely, there exist Killing spinors for $s=\frac12$ which form an orthonormal basis of $\Sigma S^n$ at every point of $S^n$ and similarly for $s=-\frac12$.

We pull back $\Sigma S^n$ along $f$ to $M$ and obtain a Lipschitz bundle $f^*\Sigma S^n$, equipped with a Hermitian metric.
For each $s\in\R$ we pull back $\ns$ and obtain a metric connection $f^*\ns$ on $f^*\Sigma S^n$.
The corresponding twisted Dirac operator $\D_{f,s}$ is defined in the usual manner and yields a selfadjoint operator $H^1(M;\Sigma M\otimes f^*\Sigma S^n) \to L^2(M;\Sigma M\otimes f^*\Sigma S^n)$.
Here $L^2(M;\Sigma M\otimes f^*\Sigma S^n)$ and $H^1(M;\Sigma M\otimes f^*\Sigma S^n)$ are defined as the completions of the space of all Lipschitz sections of $\Sigma M\otimes f^*\Sigma S^n$ with respect to the usual $L^2$-norm $\|\cdot\|_{L^2}$ and Sobolev norm $\|\cdot\|_{L^2}+\|\nabla\cdot\|_{L^2}$, respectively.
See Section~4 of \cite{CHS} for further details, although many complications treated in \cite{CHS} do not arise in our setup because all our Riemannian metrics are smooth.

We denote by $\Ctriv$ the trivial $\C^{\spindim}$-bundle over $M$ with standard Hermitian scalar product and standard connection $d$.
The Dirac operators on $\Sigma M$ and on $\Sigma M\otimes \Ctriv$ will both be denoted by $\D$.
The have the same spectrum, up to multiplicity.

We choose $\nabla^{1/2}$-parallel spinor fields $(\psi_\alpha^+)_{\alpha=1,\dots,\spindim}$ which form an orthonormal basis of $\Sigma S^n$ at each point of $S^n$.
We pull back the $\psi_\alpha^+$ along $f$ and obtain sections $\Psi_\alpha^+:=f^*\psi_\alpha^+$ of $f^*\Sigma S^n$ which form an orthonormal basis at each point of $M$ and satisfy $(f^*\ns)_X\Psi_\alpha^+ = (s-\frac12) f^*(df(X)\cdot \psi_\alpha^+)$.

Each $\phi\in\Sigma M\otimes f^*\Sigma S^n$ can be expanded uniquely as
\begin{equation}
\phi = \sum_\alpha \phi_\alpha \otimes \Psi_\alpha^+ .
\label{eq.expandphi}
\end{equation}
The map $\phi\mapsto (\phi_\alpha)_{\alpha=1,\dots,2^{\lfloor\frac{n}{2}\rfloor}}$ defines a unitary vector bundle isomorphism 
\begin{equation}
U_+\colon \Sigma M\otimes f^*\Sigma S^n \to \Sigma M\otimes \Ctriv
\label{eq.defU+}
\end{equation}
of Lipschitz regularity.
We have $\|\phi\|_{L^2}^2 = \|U_+\phi\|_{L^2}^2 = \sum_\alpha\|\phi_\alpha\|_{L^2}^2$.
Moreover, $\phi\in H^1(M;\Sigma M\otimes f^*\Sigma S^n)$ if and only if $U_+\phi\in H^1(M;\Sigma M\otimes \Ctriv)$  which is equivalent to all $\phi_\alpha\in H^1(M;\Sigma M)$.

Similarly, we can use an orthonormal basis of $\nabla^{-1/2}$-parallel spinor fields $(\psi_\alpha^-)_{\alpha=1,\dots,\spindim}$.
Replacing $\Psi_\alpha^+$ in \eqref{eq.expandphi} by $\Psi_\alpha^- := f^*\phi_\alpha^-$ we obtain a second unitary vector bundle isomorphism $U_-\colon \Sigma M\otimes f^*\Sigma S^n \to \Sigma M\otimes \Ctriv$ with similar properties as $U_+$.

For $\alpha,\beta=1,\dots,\spindim$ let $\zeta_{\alpha\beta}^\pm$ be the complex $1$-forms defined by
\begin{equation}
\zeta_{\alpha\beta}^\pm(X) = \< df(X)\cdot \psi_\alpha^\pm,\psi_\beta^\pm\> .
\label{eq.defzeta}
\end{equation}
Since $f$ is Lipschitz, the differential $df$ exists almost everywhere on $M$ and defines an $L^\infty$-section.
Hence, $\zeta_{\alpha\beta}^\pm$ is an $L^\infty$-$1$-form on $M$.
Let $Z_{\alpha\beta}^\pm$ be the corresponding $L^\infty$-vector fields on $M$, i.e.,
\[
Z_{\alpha\beta} ^\pm
= 
\sum_{j=1}^n \zeta_{\alpha\beta}^\pm(e_j)e_j
=
\sum_{j=1}^n \< df(e_j)\cdot \psi_\alpha^\pm,\psi_\beta^\pm\>e_j,
\]
where $e_1,\dots,e_n$ is a local orthonormal tangent frame.
We obtain two $L^\infty$-endomorphism fields $Z^+$ and $Z^-$ on $\Sigma M\otimes \Ctriv$ by
$$
Z^\pm\colon (\phi_\alpha)_\alpha \mapsto \Big(\sum_\beta Z_{\beta\alpha}^\pm \cdot \phi_\beta\Big)_\alpha .
$$

\begin{lemma}\label{lem.DiracConjugate}
We have 
\begin{equation}
U_+ \circ \D_{f,s} \circ U_+^{-1} = \D + (s-\tfrac12)Z^+
\label{eq.DiracConjugate+}
\end{equation}
and 
\begin{equation}
U_- \circ \D_{f,s} \circ U_-^{-1} = \D + (s+\tfrac12)Z^- .
\label{eq.DiracConjugate-}
\end{equation}
\end{lemma}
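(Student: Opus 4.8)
The plan is to compute $\D_{f,s}$ explicitly in the moving frame $(\Psi_\alpha^+)_\alpha$ and read off the conjugated operator. Fix a smooth local orthonormal frame $e_1,\dots,e_n$ of $TM$; since the metric of $M$ is smooth, this is unproblematic. Write a Lipschitz section $\phi$ of $\Sigma M\otimes f^*\Sigma S^n$ as $\phi=\sum_\alpha\phi_\alpha\otimes\Psi_\alpha^+$ as in \eqref{eq.expandphi}, the $\phi_\alpha$ being Lipschitz sections of $\Sigma M$. The twisted connection is $\nabla^{\Sigma M}\otimes 1+1\otimes f^*\ns$, and the Leibniz rule holds almost everywhere for Lipschitz sections, so
\[
\nabla_{e_j}\phi
=\sum_\alpha\big(\nabla^{\Sigma M}_{e_j}\phi_\alpha\big)\otimes\Psi_\alpha^+
+\sum_\alpha \phi_\alpha\otimes (f^*\ns)_{e_j}\Psi_\alpha^+ .
\]

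First I would substitute the identity $(f^*\ns)_{e_j}\Psi_\alpha^+=(s-\tfrac12)\,f^*\!\big(df(e_j)\cdot\psi_\alpha^+\big)$ recorded just above \eqref{eq.expandphi}, expand $df(e_j)\cdot\psi_\alpha^+$ in the pointwise orthonormal basis $(\psi_\beta^+)_\beta$ of $\Sigma S^n$, and recognise the coefficients as the $\zeta_{\alpha\beta}^+(e_j)$ of \eqref{eq.defzeta}. Pulling back along $f$ gives $(f^*\ns)_{e_j}\Psi_\alpha^+=(s-\tfrac12)\sum_\beta\zeta_{\alpha\beta}^+(e_j)\Psi_\beta^+$, and after relabelling $\alpha\leftrightarrow\beta$ in the double sum,
\[
\nabla_{e_j}\phi
=\sum_\alpha\Big(\nabla^{\Sigma M}_{e_j}\phi_\alpha
+(s-\tfrac12)\sum_\beta\zeta_{\beta\alpha}^+(e_j)\,\phi_\beta\Big)\otimes\Psi_\alpha^+ .
\]
Clifford multiplication by $e_j$ acts only on the $\Sigma M$ factor, so applying $\sum_j e_j\cdot$ and using $\sum_j\zeta_{\beta\alpha}^+(e_j)e_j=Z_{\beta\alpha}^+$ yields
\[
\D_{f,s}\phi
=\sum_\alpha\Big(\D\phi_\alpha+(s-\tfrac12)\sum_\beta Z_{\beta\alpha}^+\cdot\phi_\beta\Big)\otimes\Psi_\alpha^+ .
\]
Applying $U_+$, i.e.\ passing to the tuple of $\alpha$-components, this reads $U_+\D_{f,s}\phi=\D(U_+\phi)+(s-\tfrac12)Z^+(U_+\phi)$, which is \eqref{eq.DiracConjugate+}. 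The argument for $U_-$ is word for word the same, except that the $\psi_\alpha^-$ are $\nabla^{-1/2}$-parallel, so $(f^*\ns)_{e_j}\Psi_\alpha^-=(s+\tfrac12)f^*(df(e_j)\cdot\psi_\alpha^-)$ and the prefactor $s-\tfrac12$ is replaced by $s+\tfrac12$, giving \eqref{eq.DiracConjugate-}.

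The one point deserving care is regularity: since $f$ is only Lipschitz, $df$, the forms $\zeta_{\alpha\beta}^\pm$, the vector fields $Z_{\alpha\beta}^\pm$ and the endomorphism fields $Z^\pm$ are merely $L^\infty$, and the identity $(f^*\ns)_{e_j}\Psi_\alpha^\pm=(s\mp\tfrac12)f^*(df(e_j)\cdot\psi_\alpha^\pm)$ holds only almost everywhere. I would therefore first justify the displayed manipulations pointwise a.e.\ for Lipschitz $\phi$, where the Leibniz rule is valid; both sides of \eqref{eq.DiracConjugate+} are then bounded operators $H^1(M;\Sigma M\otimes\Ctriv)\to L^2(M;\Sigma M\otimes\Ctriv)$, because $Z^+$ is an $L^\infty$ endomorphism field and $U_+$ is unitary and preserves $H^1$, so the identity extends from the dense subspace of Lipschitz sections to all of $H^1$. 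This density-and-continuity step, rather than the underlying algebra, is where the actual work lies.
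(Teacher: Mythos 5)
Your proof is correct and follows essentially the same route as the paper's: expand $\phi$ in the pulled-back Killing frame, apply the Leibniz rule for the twisted connection, use $(f^*\ns)_{e_j}\Psi_\alpha^\pm=(s\mp\tfrac12)f^*(df(e_j)\cdot\psi_\alpha^\pm)$, expand in the basis $(\psi_\beta^\pm)$, relabel indices and identify $Z_{\beta\alpha}^\pm$. Your closing remark on the a.e.\ validity for Lipschitz sections and extension to $H^1$ by density is a reasonable supplement to the paper's purely algebraic computation.
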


\begin{proof}
We compute, using a local orthonormal tangent frame $e_1,\dots,e_n$:
\begin{align*}
\D_{f,s}\phi 
&=
\D_{f,s}\sum_\alpha \phi_\alpha \otimes \Psi_\alpha^\pm \\
&=
\sum_\alpha \D \phi_\alpha \otimes \Psi_\alpha^\pm + \sum_{j,\alpha} e_j\cdot \phi_\alpha \otimes \ns_{e_j}\Psi_\alpha^\pm \\
&=
\sum_\alpha \D \phi_\alpha \otimes \Psi_\alpha^\pm + \sum_{j,\alpha} e_j\cdot \phi_\alpha \otimes (s\mp\tfrac12)f^*(df(e_j)\cdot\psi_\alpha^\pm) \\
&=
\sum_\alpha \D \phi_\alpha \otimes \Psi_\alpha^\pm + (s\mp\tfrac12)\sum_{j,\alpha,\beta} e_j\cdot \phi_\alpha \otimes \<f^*(df(e_j)\cdot\psi_\alpha^\pm),\Psi_\beta^\pm\>\Psi_\beta^\pm \\
&=
\sum_\alpha \D \phi_\alpha \otimes \Psi_\alpha^\pm + (s\mp\tfrac12)\sum_{j,\alpha,\beta} e_j\cdot \phi_\alpha \otimes \<df(e_j)\cdot\psi_\alpha^\pm,\psi_\beta^\pm\>\Psi_\beta^\pm \\
&=
\sum_\alpha \Big(\D \phi_\alpha + (s\mp\tfrac12)\sum_{j,\beta} \<df(e_j)\cdot\psi_\beta^\pm,\psi_\alpha^\pm\>e_j\cdot \phi_\beta \Big)\otimes \Psi_\alpha^\pm \\
&=
\sum_\alpha \Big(\D \phi_\alpha + (s\mp\tfrac12)\sum_{\beta} Z_{\beta\alpha}^\pm\cdot \phi_\beta \Big)\otimes \Psi_\alpha^\pm .
\end{align*}
This proves the lemma.
\end{proof}

The lemma implies that $\D=U(\D+Z^-)U^{-1}$ where $U=U_+U_-^{-1}$.
In particular, $\D$ and $\D+Z^-$ have the same spectrum.
Similarly, $\D$ and $\D-Z^+$ are conjugate and hence have the same spectrum.

For $v,w\in T_pM$ we consider the endomorphism 
$$
\tau_{v,w}:=v\cdot w\otimes d_pf(v)\cdot d_pf(w)
$$
of $(\Sigma M\otimes f^*\Sigma S^n)_p$.
The following lemma provides the crucial estimate:

\begin{lemma}\label{lem.basicestimate}
Let $s\in\R$ and let $\phi \in H^1(M;\Sigma M\otimes f^*\Sigma S^n)$ with $\D_{f,s}\phi=0$.
We write $U_+\phi =: (\phi_\alpha)_\alpha$ if $s\ge0$ and $U_-\phi =: (\phi_\alpha)_\alpha$ if $s<0$.
Then
\begin{equation}
\sum_\alpha |\D\phi_\alpha|^2
\le
\big(\tfrac12-|s|\big)^2\, |df|_1^2 \,\sum_\alpha |\phi_\alpha|^2
\label{eq.basicestimate}
\end{equation}
holds almost everywhere on $M$.

Moreover, if $|s|\neq\frac12$, if $f$ is differentiable at $p\in M$, if $d_pf$ is a linear isometry and if equality holds in \eqref{eq.basicestimate} at $p$, then
\begin{equation}
\tau_{v,w}\,\phi(p) = \phi(p)
\label{eq.vwrigid}
\end{equation}
for all orthonormal $v,w\in T_pM$.
\end{lemma}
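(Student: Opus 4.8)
The plan is to pass to the trivialised picture via the isomorphisms $U_\pm$ of \eqref{eq.defU+} and to reduce everything to a pointwise computation in Clifford algebras; I would treat $s\ge0$ with $U_+$ and $s<0$ with $U_-$, the two cases being symmetric. Consider $s\ge0$. By Lemma~\ref{lem.DiracConjugate}, $\D_{f,s}\phi=0$ is equivalent to $\D(U_+\phi)=\bigl(\tfrac12-s\bigr)\,Z^+(U_+\phi)$ in $L^2$, hence almost everywhere; since $\D$ acts componentwise on $\Sigma M\otimes\Ctriv$ we have $\sum_\alpha|\D\phi_\alpha|^2=|\D(U_+\phi)|^2$ and $\sum_\alpha|\phi_\alpha|^2=|U_+\phi|^2$ pointwise. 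Thus \eqref{eq.basicestimate} reduces to the pointwise operator bound $|Z^+\chi|\le|df|_1\,|\chi|$, to be checked almost everywhere, namely where $f$ is differentiable (Rademacher).

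To establish this bound, fix a point $p$ at which $f$ is differentiable and choose orthonormal bases $e_1,\dots,e_n$ of $T_pM$ and $\epsilon_1,\dots,\epsilon_n$ of $T_{f(p)}\Sn$ adapted to $A:=d_pf$, i.e.\ $Ae_j=\mu_j\epsilon_j$ with $0\le\mu_1\le\dots\le\mu_n$ the singular values, so that $|df|_1=\mu_1+\dots+\mu_n$ at $p$. Let $c_j^+$ be the matrix of Clifford multiplication by $\epsilon_j$ on $\Sigma\Sn_{f(p)}$ with respect to the orthonormal basis $(\psi_\alpha^+)_\alpha$; it is unitary because $(\epsilon_j\cdot)^*(\epsilon_j\cdot)=\id$. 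Unwinding the definitions of $\zeta^+_{\beta\alpha}$ and $Z^+$ identifies $Z^+$ at $p$ with the endomorphism $\sum_j\mu_j\,(e_j\cdot)\otimes c_j^+$ of $(\Sigma M\otimes\Ctriv)_p$. Each summand $(e_j\cdot)\otimes c_j^+$ is unitary, so the triangle inequality gives $|Z^+\chi|\le\sum_j\mu_j\,|\chi|=|df|_1\,|\chi|$; combining with the above yields $|\D(U_+\phi)|^2\le\bigl(\tfrac12-s\bigr)^2|df|_1^2\,|U_+\phi|^2$, which is \eqref{eq.basicestimate} since $s=|s|$. For $s<0$ one argues identically with $U_-$, the identity $Z^-|_p=\sum_j\mu_j(e_j\cdot)\otimes c_j^-$ (with $c_j^-$ defined via the basis $(\psi_\alpha^-)_\alpha$), and the coefficient $-(s+\tfrac12)$, whose modulus equals $\bigl|\tfrac12-|s|\bigr|$.

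For the rigidity statement, suppose $|s|\neq\tfrac12$; then the coefficient above is nonzero and $\bigl(\tfrac12-|s|\bigr)^2>0$, so equality in \eqref{eq.basicestimate} at $p$ forces $|Z^\pm\chi|=|df|_1\,|\chi|$ with $\chi:=U_\pm\phi(p)$. When $d_pf$ is a linear isometry, all $\mu_j=1$ and the identification $Z^\pm|_p=\sum_j(e_j\cdot)\otimes c_j^\pm$ is valid for \emph{every} orthonormal basis $e_1,\dots,e_n$ of $T_pM$, taking $\epsilon_j:=d_pf(e_j)$. The $n$ vectors $(e_j\cdot\otimes c_j^\pm)\chi$ all have norm $|\chi|$, so equality in the triangle inequality forces them to coincide (the assertion being trivial if $\chi=0$). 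Applying this with $e_1=v$, $e_2=w$, so that $\epsilon_1=d_pf(v)$, $\epsilon_2=d_pf(w)$, gives $(e_1\cdot\otimes c_1^\pm)\chi=(e_2\cdot\otimes c_2^\pm)\chi$, and a short rearrangement using $(e_2\cdot)^{-1}=-e_2\cdot$, $(c_2^\pm)^{-1}=-c_2^\pm$ together with the anticommutation relations $e_2\cdot e_1\cdot=-e_1\cdot e_2\cdot$ and $c_2^\pm c_1^\pm=-c_1^\pm c_2^\pm$ turns this into $\bigl((v\cdot w)\otimes(d_pf(v)\cdot d_pf(w))\bigr)\chi=\chi$. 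Since $U_\pm$ is a pointwise isometry conjugating this endomorphism to $\tau_{v,w}$, we conclude $\tau_{v,w}\phi(p)=\phi(p)$.

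The step I expect to be the main obstacle is the bookkeeping needed to identify $Z^\pm$ at $p$ with $\sum_j\mu_j(e_j\cdot)\otimes c_j^\pm$ — keeping straight the index conventions in the definitions of $\zeta^\pm_{\beta\alpha}$, $Z^\pm_{\beta\alpha}$ and of $U_\pm$ — and to see that, when $d_pf$ is an isometry, this representation does not depend on the chosen orthonormal frame. Once these identifications are in place, the remaining steps are just the triangle inequality and elementary Clifford algebra.
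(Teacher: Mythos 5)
Your proposal is correct and follows essentially the same route as the paper: conjugate by $U_\pm$ via Lemma~\ref{lem.DiracConjugate}, work pointwise in bases adapted to the singular values of $d_pf$, and deduce the rigidity from the equality case when all $\mu_j=1$ and the frame is arbitrary. The only difference is cosmetic: you bound $Z^\pm$ at $p$ as a $\mu_j$-weighted sum of the unitaries $(e_j\cdot)\otimes c_j^\pm$ via the triangle inequality (whose equality case yields $\tau_{v,w}\phi(p)=\phi(p)$ after a short Clifford manipulation), while the paper expands the square and estimates the cross terms $\langle(e_j\cdot e_k\otimes e_j^0\cdot e_k^0)\phi,\phi\rangle\le|\phi|^2$ using selfadjoint involutions — the same estimate in expanded form.
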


\begin{proof}
First note that since $\phi_\alpha$ and $D\phi_\alpha$ are $L^2$-spinors and $|df|_1$ is an $L^\infty$-function, both sides in \eqref{eq.basicestimate} are $L^1$-functions.
Hence, it makes sense to evaluate them almost everywhere.

Now we fix a point $p\in M$ at which the Lipschitz map $f$ is differentiable and choose orthonormal bases $e_1,\dots,e_n$ of $T_pM$ and $e_1^0,\dots,e_n^0$ of $T_{f(p)}S^n$ such that $df(e_j)=\mu_j e_j^0$ where $0\le\mu_1\le\dots\le\mu_n$ are the singular values of $df$ at~$p$.

W.l.o.g.\ we assume $s\ge0$, the case $s<0$ being entirely analogous.
From Lemma~\ref{lem.DiracConjugate} we have $(\D\phi_\alpha)_\alpha = (\frac12-s)Z^+(\phi_\alpha)_\alpha$.
We compute at~$p$:
\begin{align}
\sum_\alpha |\D \phi_\alpha|^2
&=
\big(\tfrac12-s\big)^2|Z^+(\phi_\alpha)_\alpha|^2 \\
&=
\big(\tfrac12-s\big)^2\sum_\alpha \Big\<\sum_\beta Z_{\beta\alpha}^+\phi_\beta, \sum_\gamma Z_{\gamma\alpha}^+\phi_\gamma\Big\> \notag\\
&=
\big(\tfrac12-s\big)^2\sum_{\alpha,\beta,\gamma,\delta} \Big\<\sum_\beta Z_{\beta\alpha}^+\phi_\beta, \sum_\gamma Z_{\gamma\delta}^+\phi_\gamma\Big\>\<\psi_\alpha^+,\psi_\delta^+\> \notag\\
&=
\big(\tfrac12-s\big)^2\sum_{\alpha,\beta,\gamma,\delta,j,k}\Big\< \<df(e_j)\cdot\Psi_\beta^+,\Psi_\alpha^+\>e_j\cdot \phi_\beta , \<df(e_k)\cdot\Psi_\gamma^+,\Psi_\delta^+\>e_k\cdot \phi_\gamma \Big\>\<\psi_\alpha^+,\psi_\delta^+\>\notag\\
&=
\big(\tfrac12-s\big)^2\sum_{\alpha,\beta,\gamma,\delta,j,k}\big\< e_j\cdot \phi_\beta , e_k\cdot \phi_\gamma \big\>\big\<\<df(e_j)\cdot\Psi_\beta^+,\Psi_\alpha^+\>\psi_\alpha^+,\<df(e_k)\cdot\Psi_\gamma^+,\Psi_\delta^+\>\psi_\delta^+\big\>\notag\\
&=
\big(\tfrac12-s\big)^2\sum_{\beta,\gamma,j,k}\big\< e_j\cdot \phi_\beta , e_k\cdot \phi_\gamma \big\>\big\<df(e_j)\cdot\Psi_\beta^+,df(e_k)\cdot\Psi_\gamma^+\big\>\notag\\
&=
\big(\tfrac12-s\big)^2\sum_{\beta,\gamma,j,k}\mu_j\mu_k\big\< e_j\cdot \phi_\beta , e_k\cdot \phi_\gamma \big\>\big\<e^0_j\cdot\Psi_\beta^+,e^0_k\cdot\Psi_\gamma^+\big\>\notag\\
&=
\big(\tfrac12-s\big)^2\sum_{\beta,\gamma,j,k}\mu_j\mu_k\big\< e_j\cdot e_k\cdot \phi_\beta , \phi_\gamma \big\>\big\<e^0_j\cdot e^0_k\cdot\Psi_\beta^+,\Psi_\gamma^+\big\>\notag\\
&=
\big(\tfrac12-s\big)^2\sum_{j,k}\mu_j\mu_k \big\<(e_j\cdot e_k\otimes e^0_j\cdot e^0_k)\phi,\phi\big\> .
\label{eq.sumDphialpha}
\end{align}
We investigate the summands of the right-hand side for fixed $j$ and $k$.
If $j=k$, then we have
\begin{align}
\big\<(e_j\cdot e_k\otimes e^0_j\cdot e^0_k)\phi,\phi\big\>
=
\big\<((-1)\otimes (-1))\phi,\phi\big\>
=
|\phi|^2.
\label{eq.j=k}
\end{align}
If $j\neq k$, then $e_j\cdot e_k\otimes e^0_j\cdot e^0_k$ is a selfadjoint involution.
Therefore,
\begin{align}
\big\<(e_j\cdot e_k\otimes e^0_j\cdot e^0_k)\phi,\phi\big\>
\le|\phi|^2
\label{eq.jneqk}
\end{align}
with equality if and only if 
\begin{align}
(e_j\cdot e_k\otimes e^0_j\cdot e^0_k)\phi=\phi .
\label{eq.jneqkrigid}
\end{align}
Inserting \eqref{eq.j=k} and \eqref{eq.jneqk} into \eqref{eq.sumDphialpha} yields
\[
\sum_\alpha |\D \phi_\alpha|^2
\le
\big(\tfrac12-s\big)^2\sum_{j,k}\mu_j\mu_k  |\phi|^2
=
\big(\tfrac12-s\big)^2 |df|_1^2 \sum_\alpha |\phi_\alpha|^2 .
\]
If $d_pf$ is an isometry, then we have $\mu_1=\dots=\mu_n=1$ at $p$ and the orthonormal basis $e_1,\dots,e_n$ can be chosen arbitrarily.
Together with \eqref{eq.jneqkrigid} this implies \eqref{eq.vwrigid}.
\end{proof}

Finally, we will need the following lemma:

\begin{lemma}\label{lem.Z+-}
We have 
$$
U_+^{-1}Z^+U_+ = U_-^{-1}Z^- U_- = \sum_j e_j \otimes df(e_j)
$$
where $e_1,\dots,e_n$ denotes a local orthonormal frame and the tangent vectors act by Clifford multiplication.
\end{lemma}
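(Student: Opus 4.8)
The identity will be proved by a direct fiberwise computation: apply both sides to an arbitrary $\phi\in\Sigma M\otimes f^*\Sigma S^n$ at a point $p\in M$ where $f$ is differentiable, and unwind all definitions. I do not expect a genuine obstacle here, only careful bookkeeping of indices and of the pullback operations; both $Z^\pm$ and $\sum_j e_j\otimes df(e_j)$ are $L^\infty$-endomorphism fields, so checking equality almost everywhere suffices.

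First I would fix a local orthonormal tangent frame $e_1,\dots,e_n$ near $p$ and write $\phi=\sum_\beta\phi_\beta\otimes\Psi_\beta^\pm$ as in \eqref{eq.expandphi}, so that $U_\pm\phi=(\phi_\beta)_\beta$, with $\Psi_\beta^\pm=f^*\psi_\beta^\pm$. Inserting this into the definitions of $Z^\pm$ and of the $1$-forms $Z_{\beta\alpha}^\pm$ gives
\[
U_\pm^{-1}Z^\pm U_\pm\phi
=\sum_\alpha\Big(\sum_\beta Z_{\beta\alpha}^\pm\cdot\phi_\beta\Big)\otimes\Psi_\alpha^\pm
=\sum_{\alpha,\beta,j}\big\<df(e_j)\cdot\psi_\beta^\pm,\psi_\alpha^\pm\big\>\;(e_j\cdot\phi_\beta)\otimes\Psi_\alpha^\pm .
\]
For the other side, $df(e_j)\in T_{f(p)}S^n$, so Clifford multiplication on the $f^*\Sigma S^n$-factor is just the pullback of Clifford multiplication on $\Sigma S^n$, and hence $\big(e_j\otimes df(e_j)\big)\phi=\sum_\beta(e_j\cdot\phi_\beta)\otimes f^*\!\big(df(e_j)\cdot\psi_\beta^\pm\big)$. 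Since the $\psi_\alpha^\pm$ form an orthonormal basis of $\Sigma S^n$ at every point, I would expand $df(e_j)\cdot\psi_\beta^\pm=\sum_\alpha\<df(e_j)\cdot\psi_\beta^\pm,\psi_\alpha^\pm\>\psi_\alpha^\pm$ and pull back along $f$; summing over $j$ then turns $\sum_j\big(e_j\otimes df(e_j)\big)\phi$ into precisely the expression displayed above. This yields $U_\pm^{-1}Z^\pm U_\pm=\sum_j e_j\otimes df(e_j)$.

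Finally, the endomorphism $\sum_j e_j\otimes df(e_j)$ refers neither to the sign $\pm$ nor to the chosen orthonormal frame, frame-independence being the usual one-line check with the orthogonal transition matrix between frames. Consequently $U_+^{-1}Z^+U_+$ and $U_-^{-1}Z^-U_-$ agree, and the lemma follows. The only step meriting a word of care is the identification of the Clifford action of $df(e_j)$ on $f^*\Sigma S^n$ with the pulled-back Clifford action from $\Sigma S^n$, but this is part of the definition of the Clifford module structure on $f^*\Sigma S^n$, so nothing is lost.
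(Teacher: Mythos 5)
Your computation is correct and is essentially the paper's own proof: the paper also expands $\phi=\sum_\beta\phi_\beta\otimes\Psi_\beta^\pm$, inserts the definition of $Z^\pm_{\beta\alpha}$, and uses the orthonormal expansion of $df(e_j)\cdot\psi_\beta^\pm$ in the basis $(\psi_\alpha^\pm)_\alpha$ to identify the result with $\sum_j(e_j\otimes df(e_j))\phi$. Your closing remarks on frame- and sign-independence are fine but only make explicit what is already immediate from the common middle expression.
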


\begin{proof}
Writing $U_\pm\phi = (\phi_\alpha)_\alpha$ we compute:
\begin{align*}
U_\pm^{-1} Z^\pm U_\pm \phi
&=
U_\pm^{-1} Z^\pm (\phi_\alpha)_\alpha \\
&=
\sum_{\alpha\beta} Z_{\beta\alpha}^\pm \cdot\phi_\beta \otimes \Psi_\alpha^\pm\\
&=
\sum_{\alpha\beta j} \<df(e_j)\cdot\psi_\beta^\pm,\psi_\alpha^\pm\>e_j \cdot\phi_\beta \otimes \Psi_\alpha^\pm\\
&=
\sum_{\beta j} e_j \cdot\phi_\beta \otimes df(e_j)\cdot\Psi_\beta^\pm\\
&=
\sum_{j} (e_j \otimes df(e_j))\phi .
\qedhere
\end{align*}
\end{proof}

\section{Proof of the main theorem} 
\label{sec.spherrad} 

In this section we prove the main theorem.
Let $f\colon M\to S^n$ be a Lipschitz map with $\deg(f)\ne0$ and $\Lip(f)=\frac{1}{\RadSn(M)}$.

\subsection{\texorpdfstring{Proof of $\boldsymbol{\ker(\D_{f,0})\ne0}$ if the dimension $\boldsymbol{n}$ is even.}{Proof of ker(Df,0)≠0 if the dimension n is even}}
\label{subsec.nontrivialkernel}
In even dimension we have the chirality splittings of $\Sigma M=\Sigma^+M\oplus \Sigma^-M$ and $\Sigma S^n=\Sigma^+S^n\oplus \Sigma^-S^n$ at our disposal.
The connection $\nn$ preserves the splitting $\Sigma S^n=\Sigma^+S^n\oplus \Sigma^-S^n$ (unlike $\ns$ for $s\ne0$).
Thus, the twisted Dirac operator $\D_{f,0}$ can be restricted to four operators:
\begin{align*}
\D_{f,0}^{++}\colon& C^\infty(M;\Sigma^+ M\otimes f^*\Sigma^+ S^n)\to C^\infty(M;\Sigma^- M\otimes f^*\Sigma^+ S^n),\\
\D_{f,0}^{+-}\colon& C^\infty(M;\Sigma^+ M\otimes f^*\Sigma^- S^n)\to C^\infty(M;\Sigma^- M\otimes f^*\Sigma^- S^n),\\
\D_{f,0}^{-+}\colon& C^\infty(M;\Sigma^- M\otimes f^*\Sigma^+ S^n)\to C^\infty(M;\Sigma^+ M\otimes f^*\Sigma^+ S^n),\\
\D_{f,0}^{--}\colon& C^\infty(M;\Sigma^- M\otimes f^*\Sigma^- S^n)\to C^\infty(M;\Sigma^+ M\otimes f^*\Sigma^- S^n).
\end{align*}
The Atiyah-Singer index theorem applies (see Theorem~4.8 in \cite{CHS}) and yields
\begin{align*}
\ind(\D_{f,0}^{++}) + \ind(\D_{f,0}^{--})
&=
\ind(\D_{f,0}^{++}) - \ind(\D_{f,0}^{+-}) \\
&=
\<\Ahat(M)\cup f^*(\ch(\Sigma^+ S^n) - \ch(\Sigma^- S^n)),[M]\>\\
&=
\< \Ahat(M)\cup (-1)^{\frac{n}{2}}f^*\e(TS^n),[M]\>\\
&=
(-1)^{\frac{n}{2}}\< f^*\e(TS^n) ,[M]\>\\
&=
(-1)^{\frac{n}{2}}\deg(f)\<\e(TS^n),[S^n]\> \\
&= 
(-1)^{\frac{n}{2}}\cdot 2\cdot\deg(f).
\end{align*}
Here $\Ahat$ denotes the $\Ahat$-class, $\ch$ the Chern character and $\e$ the Euler class.
In the last line we used once more that $n$ is even and hence $S^n$ has Euler characteristic~$2$.
If $(-1)^{\frac{n}{2}}\cdot\deg(f)>0$ we find that $\ind(\D_{f,0}^{++})>0$ or $\ind(\D_{f,0}^{--})>0$.
If $(-1)^{\frac{n}{2}}\cdot\deg(f)<0$ we find that $\ind(\D_{f,0}^{++})<0$ or $\ind(\D_{f,0}^{--})<0$ and hence $\ind(\D_{f,0}^{-+})>0$ or $\ind(\D_{f,0}^{+-})>0$.
In any case, there exists a nontrivial $\phi\in H^1(M;\Sigma M\otimes f^*\Sigma S^n)$ with $\D_{f,0}\phi=0$.
We keep in mind, however, that $\phi$ is a section of one of the subbundles $\Sigma^\pm M\otimes f^*\Sigma^\pm S^n$.

\subsection{\texorpdfstring{Proof of $\boldsymbol{\ker(\D_{f,s})\ne0}$ for some $\boldsymbol{s\in[-\frac12,\frac12]}$ if the dimension $\boldsymbol{n}$ is odd.}{Proof of ker(Df,s)≠0 for some s∈[-½,½] if the dimension n is odd}}
\label{subsec.proof.spectralflow}
Denote the spectral flow of the operator family $[-\frac12,\frac12]\ni s\mapsto \D_{f,s}$ by $\sf(\D_{f,\cdot})$.
By Lemma~\ref{lem.DiracConjugate}, this is the same as the spectral flow of the family $[0,1]\ni s\mapsto \D + sZ^-$.

For smooth $f$, this spectral flow has been computed to $|\sf(\D_{f,\cdot})|=|\deg(f)|$ in \cite{LSW}*{Prop.~3.3}\footnote{Our connection $\ns$ corresponds to the connection $\nabla_{s+\frac12}=d+(s+\frac12)\omega$ in \cite{LSW}.} based on \cite{G}*{Thm.~2.8}.
To see that the result remains true for our Lipschitz map $f$, choose a smooth map $f_1\colon M\to S^n$ such that $\dist(f(x),f_1(x))<\pi$ for all $x\in M$.
Then we obtain a homotopy $[0,1]\ni t\mapsto f_t$ between $f=f_0$ and $f_1$ by joining the image points by the unique shortest geodesic in $\Sn$ connecting them.
In particular, $\deg(f)=\deg(f_1)$.
By homotopy invariance of the spectral flow, we have
\begin{align}
\sf\big([-\tfrac{1}{2},&\tfrac12]\ni s \mapsto \D_{f,s}\big) \notag\\
&=
\sf\big([0,1]\ni t\mapsto \D_{f_t,-1/2}\big)
+ \sf\big([-\tfrac{1}{2},\tfrac12]\ni s\mapsto \D_{f_1,s}\big)
- \sf\big([0,1]\ni t\mapsto \D_{f_t,1/2}\big) \label{eq.sfhomotopy}\\
&=
\sf\big([-\tfrac{1}{2},\tfrac12]\ni s\mapsto \D_{f_1,s}\big) . \notag
\end{align}
The negative sign of the last term in \eqref{eq.sfhomotopy} comes from the fact that we have to run that operator family backwards.
Moreover, $\sf\big([0,1]\ni t\mapsto \D_{f_t,-1/2}\big)=\sf\big([0,1]\ni t\mapsto \D_{f_t,1/2}\big)$ follows from $\D_{f_t,-1/2}$ and $\D_{f_t,1/2}$ having the same spectrum, cf.\ Section~\ref{sec.LipschitzTwists}.

Hence $\sf(\D_{f,\cdot})\neq0$ and therefore not all $\D_{f,s}$ can be invertible. 
Since the $\D_{f,s}$ are selfadjoint, at least one $\D_{f,s}$ must have a nontrivial kernel where $s\in[-\frac12,\frac12]$.

\subsection{\texorpdfstring{Proof of estimate \eqref{eq.DiracRad} in arbitrary dimension $\boldsymbol{n}$}{Proof of estimate (1) in arbitrary dimension n}}
\label{subsec.estimate}
Now we return to arbitrary dimension $n$.
Let $\phi\in\ker(\D_{f,s})\ne0$ for some $s\in[-\frac12,\frac12]$.
We put $(\phi_\alpha)_\alpha:=U_+\phi$ if $s\ge0$ and $(\phi_\alpha)_\alpha:=U_-\phi$ if $s<0$.
Then we have $\phi_\alpha\in H^1(M;\Sigma M)$, not all $\phi_\alpha$ are zero, and, by Lemma~\ref{lem.basicestimate},
\begin{equation}
\sum_\alpha |\D\phi_\alpha|^2
\le
\big(\tfrac12-|s|\big)^2|df|_1^2 \,\sum_\alpha |\phi_\alpha|^2
\le
\tfrac14 |df|_1^2 \,\sum_\alpha |\phi_\alpha|^2 .
\label{eq.lambdaRadPointwise}
\end{equation}
Since $\Lip(f)=1/\RadSn(M)$ all singular values of $df$ are bounded by $1/\RadSn(M)$ and hence $|df|_1\le n/\RadSn(M)$.
Integrating over $M$ we find
\begin{equation}
\lambda_1(\D^2)\sum_\alpha \|\phi_\alpha\|_{L^2}^2
\le
\sum_\alpha \|\D \phi_\alpha\|_{L^2}^2
\le
\tfrac{n^2}{4\RadSn(M)^2} \sum_\alpha \|\phi_\alpha\|_{L^2}^2 .
\label{eq.lambdaRad}
\end{equation}
This implies $\lambda_1(\D^2) \le \frac{n^2}{4\RadSn(M)^2}$ and concludes the proof of \eqref{eq.DiracRad}.

\subsection{Equality for the sphere.}
If $M=\Sn(R)$, then the smallest Dirac eigenvalue is known to be $\lambda_1(\D^2)=\frac{n^2}{4R^2}$, see e.g.\ Theorem~1 in \cite{B-SpaceForms}.
Thus equality holds in \eqref{eq.DiracRad} in this case.

\subsection{\texorpdfstring{Beginning of the equality discussion in arbitrary dimension $\boldsymbol{n}$.}{Beginning of the equality discussion in arbitrary dimension n}}
Conversely, assume equality in \eqref{eq.DiracRad}.
For notational convenience, we assume that $\RadSn(M)=1$ which can be achieved by rescaling the metric.
Hence, $\Lip(f)=1$ and thus the singular values of $df$ satisfy $\mu_j\le1$ almost everywhere.

Let $\phi=\sum_\alpha \phi_\alpha\otimes \Psi_\alpha\in H^1(M;\Sigma M\otimes f^*\Sigma S^n)$ be nontrivial with $\D_{f,s}\phi=0$ with $s\in[-\frac12,\frac12]$.
Equality in \eqref{eq.DiracRad} implies equality in \eqref{eq.lambdaRad} and thus $\phi_\alpha$ are eigenspinors for $\D^2$ for the eigenvalue $\lambda_1(\D^2)$.
In particular, they are smooth and the zero locus of the nontrivial $\phi_\alpha$ have Hausdorff codimension $\ge2$ by the main theorem in \cite{B-Nodal}.
Thus, $\phi$ is continuous and its zero locus $\ZZ$ has Hausdorff codimension~$\ge2$.
In particular, it is a nullset and its complement is open and connected.

Moreover, equality in \eqref{eq.DiracRad} implies equality in $|df|_1\le n/\RadSn(M)=n$ unless $\phi=0$.
Therefore, all $\mu_j=1$.
This means that $df$ is a linear isometry almost everywhere.
It remains to show that $d_pf$ preserves the orientation for almost all $p$ or it reverses the orientation for almost all $p$ since it then follows from Theorem~1 in \cite{KMS} that $f\colon M\to S^n$ is a smooth Riemannian covering.
Since $M$ is connected and $S^n$ is simply connected, $f$ then is an isometry.

Equality in \eqref{eq.lambdaRad} implies equality in \eqref{eq.lambdaRadPointwise}.
Thus $s=0$ and, by Lemma~\ref{lem.basicestimate}, equation \eqref{eq.basicestimate} holds almost everywhere.

Let $\omega_\C$ be the Clifford volume element for $\Sigma M$.
It is a field of selfadjoint involutions on $\Sigma M$ and can locally be written as $\omega_\C = i^{\lfloor \frac{n+1}{2}\rfloor}e_1\cdots e_n$ where $e_1,\dots,e_n$ is a positively oriented local orthonormal tangent frame.
Similarly, let $\omega_\C^0$ be the Clifford volume element for $\Sigma S^n$.

\subsection{\texorpdfstring{Conclusion of the equality discussion for even dimension $\boldsymbol{n}$.}{Conclusion of the equality discussion for even dimension n}}
\label{subsec.eqneven}
We conclude the equality discussion in \eqref{eq.DiracRad} if the dimension $n$ is even.
The volume element $\omega_\C$ acts as $\pm 1$ on $\Sigma^\pm M$ and similarly for $\omega_\C^0$.
Since $\phi$ is a section of one of the subbundles $\Sigma^\pm M\otimes f^*\Sigma^\pm S^n$, we have that 
\begin{equation}
(\omega_\C\otimes \omega_\C^0)\phi=\eps \phi
\label{eq.evenn.phi1}
\end{equation}
for a constant sign $\eps=\pm 1$.

Let $p\in M$ such that $f$ is differentiable at $p$.
Let $e_1,\dots,e_n$ be a positively oriented orthonormal basis of $T_pM$.
Since $d_pf$ is a linear isometry, $d_pf(e_1),\dots,d_pf(e_n)$ is an orthonormal basis of $T_{f(p)}S^n$.
It is positively oriented if and only if $d_pf$ is orientation preserving. 
Therefore,
$$
(-1)^{\lfloor \frac{n+1}{2}\rfloor}\omega_\C \otimes \omega_\C^0 
= 
\sigma(p) e_1\cdots e_n\otimes d_pf(e_1) \otimes d_pf(e_n)
=
\sigma(p) \tau_{e_1,e_2}\circ \tau_{e_3,e_4}\circ \dots \circ \tau_{e_{n-1,n}}
$$
where $\sigma(p)=+1$ if $d_pf$ is orientation preserving and $\sigma=-1$ if $d_pf$ reverses the orientation.
From \eqref{eq.vwrigid} we find
\begin{equation}
(-1)^{\lfloor \frac{n+1}{2}\rfloor}(\omega_\C\otimes \omega_\C^0)\phi(p) = \sigma(p) \phi(p).
\label{eq.evenn.phi2}
\end{equation}
Comparing \eqref{eq.evenn.phi1} and \eqref{eq.evenn.phi2} we find $\sigma=(-1)^{\lfloor \frac{n+1}{2}\rfloor}\eps$ almost everywhere.
Therefore, $d_pf$ preserves the orientation for almost all $p$ or it reverses the orientation for almost all $p$.
This concludes the proof for even $n$.

\subsection{\texorpdfstring{Conclusion of the equality discussion for odd dimension $\boldsymbol{n}$.}{Conclusion of the equality discussion for odd dimension n}}
Finally, assume equality in \eqref{eq.DiracRad} and that the dimension $n$ is odd.
In this case $\omega_\C$ and $\omega_C^0$ act as the identity on the spinor bundles. 
For $p$, $e_1,\dots,e_n$, and $\sigma$ as above we get 
\begin{align*}
(-1)^{\lfloor \frac{n+1}{2}\rfloor}\phi(p)
&=
(-1)^{\lfloor \frac{n+1}{2}\rfloor}(\omega_\C \otimes \omega_\C^0) \phi(p) \\
&=
\sigma(p)(e_1\otimes d_pf(e_1))\tau_{e_2,e_3}\circ \dots \circ \tau_{e_{n-1,n}}\phi(p)\\
&=
\sigma(p)(e_1\otimes d_pf(e_1))\phi(p) .
\end{align*}
Since $e_1$ can be chosen arbitrarily, this yields 
$$
(v\otimes d_pf(v))\phi(p)
=
(-1)^{\lfloor \frac{n+1}{2}\rfloor}\sigma(p)\phi(p)
$$
for all unit vectors $v\in T_pM$.
Lemma~\ref{lem.Z+-} implies $U_+^{-1}Z^+U_+\phi = (-1)^{\lfloor \frac{n+1}{2}\rfloor}n\sigma\phi$.
From Lemma~\ref{lem.DiracConjugate} we get
\begin{align*}
\D U_+\phi
&=
U_+\D_{f,0}\phi + \tfrac12 Z^+U_+\phi 
=
0 + \tfrac12 U_+\big((-1)^{\lfloor \frac{n+1}{2}\rfloor}n\sigma\phi\big) 
=
(-1)^{\lfloor \frac{n+1}{2}\rfloor}\tfrac{n\sigma}{2} U_+\phi .
\end{align*}
Writing $U_+\phi=(\phi_\alpha)_\alpha$ this implies for each $\alpha$:
\begin{equation}
\D \phi_\alpha = (-1)^{\lfloor \frac{n+1}{2}\rfloor}\tfrac{n\sigma}{2}\phi_\alpha .
\label{eq.fakeeigen}
\end{equation}
Therefore,
\begin{align*}
\sum_\alpha |\D\phi_\alpha|^2
=
\tfrac{n^2}{4}\sum_\alpha |\phi_\alpha|^2
=
\lambda_1(\D^2)\sum_\alpha |\phi_\alpha|^2
\end{align*}
Integration over $M$ yields
$$
\sum_\alpha \|\D\phi_\alpha\|_{L^2}^2
=
\lambda_1(\D^2)\sum_\alpha \|\phi_\alpha\|_{L^2}^2
$$
and hence $\D^2\phi_\alpha = \lambda_1(\D^2)\phi_\alpha$.
In particular, all $\phi_\alpha$ are smooth. 
Thus, \eqref{eq.fakeeigen} implies that $\sigma$ is smooth and hence locally constant on $M\setminus\ZZ$.
Since $M\setminus\ZZ$ is connected, $\sigma$ is constant which concludes the proof.
\hfill\qed

\section{More general target spaces}\label{sec.X}
We can allow more general target spaces than just the standard sphere $\Sn$.
At least if the dimension $n$ is even (which we assume for the rest of this section), a good class of targets seems to be the following.

\begin{definition}
Let $\XX$ be a connected closed $n$-dimensional Riemannian spin manifold with Euler number $\chi(\XX)\ne0$.
Let $W$ be a smooth field of positive semidefinite symmetric endomorphisms of the tangent bundle $T\XX$.
The pair $(\XX,W)$ is called an \emph{admissible comparison space} if the spinor bundle $\Sigma \XX$ of $\XX$ can be trivialized by spinors $\psi$ satisfying 
\begin{equation}
\nabla_X\psi = \tfrac12 W(X)\cdot \psi
\label{eq.KillingW}
\end{equation}
as well as by spinors $\psi$ satisfying 
\begin{equation}
\nabla_X\psi = -\tfrac12 W(X)\cdot \psi .
\label{eq.Killing-W}
\end{equation}
\end{definition}

Spinors satisfying \eqref{eq.KillingW} or \eqref{eq.Killing-W} are known as \emph{generalized Killing spinors}.

\begin{example}
The admissible comparison space we have been using throughout this article is $(\XX,W)=(\Sn,\id_{T\Sn})$.
\end{example}

\begin{example}\label{ex.convexdomain}
More generally, let $\Omega\subset\R^{n+1}$ be a convex domain with smooth boundary~$\XX$.
The boundary inherits a Riemannian metric and a spin structure.
Let $\nu$ be the exterior unit normal vector field along $\XX$ and let $W(X)=-\nabla_X\nu$ the Weingarten map.
Then $W$ is a positive semidefinite symmetric endomorphism field.

Since $n$ is even, the restriction of the spinor bundle $\Sigma\R^{n+1}$ to $\XX$ can be naturally identified with $\Sigma\XX$.
We can trivialize $\Sigma\R^{n+1}$ or $\Sigma^+\R^{n+1}$ by parallel spinors.
Restricting a parallel spinor $\psi$ to $\XX$, the Gauss equation for the spinorial connection (see e.g.\ Proposition~2.1 in \cite{B-harmonic}) yields
$$
0 = \nabla^{\R^{n+1}}_X\psi 
= \nabla_X\psi + \tfrac12 W(X)\star\nu\star\psi 
= \nabla_X\psi + \tfrac12 W(X)\cdot\psi .
$$
Here "$\star$" denotes Clifford multiplication w.r.t.\ $\R^{n+1}$ and "$\cdot$" is Clifford multiplication for $\XX$.
Thus, we have a trivialization of $\Sigma\XX$ by spinors solving \eqref{eq.Killing-W}.
If $\psi$ solves \eqref{eq.Killing-W}, we can put $\phi=\nu\star\psi$.
We then get
\begin{align*}
\nabla_X\phi
&=
\nabla^{\R^{n+1}}_X\phi - \tfrac12 W(X)\cdot\phi 
=
\nabla^{\R^{n+1}}_X(\nu\star\psi) - \tfrac12 W(X)\cdot \phi \\
&=
\nabla_X\nu\star\psi + \nu\star\nabla^{\R^{n+1}}_X\psi - \tfrac12 W(X)\cdot \phi
=
-W(X)\star\psi + 0 - \tfrac12 W(X)\cdot \phi \\
&=
W(X)\star\nu\star\phi - \tfrac12 W(X)\cdot \phi
=
\tfrac12 W(X)\cdot \phi .
\end{align*}
Thus, we also have a trivialization of $\Sigma\XX$ by spinors solving \eqref{eq.KillingW}.
This shows that $(\XX,W)$ is an admissible comparison space.
\end{example}

\begin{definition}\label{def.hyperX}
Let $(\XX,W)$ be an admissible comparison space and $M$ be a closed connected Riemannian manifold with $\dim(\XX)=\dim(M)=n$.
The \emph{hyper-$\XX$-radius} $\RadX(M)$ of $M$ is the supremum of all numbers $R>0$ such that there exists a Lipschitz map $f\colon M\to \XX$ with Lipschitz constant $\Lip(f)\le 1/R$ and $\deg(f)\ne0$.
\end{definition}

For $\XX=\Sn$ and $W=\id_{T\Sn}$ we recover the hyperspherical radius of $M$.
The same reasoning as in Section~\ref{sec.HypersphericalRadius} yields:

\begin{enumerate}[\myicon]
\item
\listdisplay{\RadX(M)\le \frac{\diam(M)}{\diam(\XX)},}
\item
\listdisplay{\RadX(M)^n \le \frac{\vol(M)}{\vol(\XX)}}

\noindent
with equality if and only if $M$ is isometric to $\XX$ up to rescaling to the metric,
\item
\listdisplay{\RadX(M,\lambda^2g) = \lambda\cdot\RadX(M,g),}
\item
\listdisplay{\mathsf{Rad}_{(\XX,\lambda^2g_\XX)}(M) = \tfrac{1}{\lambda}\mathsf{Rad}_{(\XX,g_\XX)}(M),}
\item
\listdisplay{\RadX(\XX)=1,}
\item 
The supremum in Definition~\ref{def.hyperX} is a maximum.
In other words, there exists a Lipschitz map $f\colon M\to \XX$ with $\Lip(f)= 1/\RadX(M)$ and $\deg(f)\neq0$.
\end{enumerate}

\begin{theorem}\label{thm.DiracRadX}
Let $(\XX,W)$ be an admissible comparison space and let $M$ be a connected closed Riemannian spin manifold with even dimension $\dim(M)=\dim(\XX)\ge2$.
Denote the Dirac operator acting on spinor fields of $M$ by $\D$.
Then
\begin{equation}
\lambda_1(\D^2) \le \frac{\max_M \tr(W)^2}{4\,\RadX(M)^2}.
\label{eq.DiracRadX}
\end{equation}
If equality holds in \eqref{eq.DiracRadX} and $W$ is positive definite almost everywhere, then $\tr(W)$ is constant and there is a finite Riemannian (up to rescaling the metric) covering map $M\to\XX$.
\end{theorem}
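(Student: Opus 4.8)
The plan is to run the argument of Section~\ref{sec.spherrad} almost verbatim, with $S^n$ replaced by $\XX$, Killing spinors by generalized Killing spinors, and the differential $df$ replaced by the composition $W\circ df$ wherever it enters the twist. Since the supremum defining $\RadX(M)$ is attained, I fix a Lipschitz map $f\colon M\to\XX$ with $\deg(f)\ne0$ and $\Lip(f)=1/\RadX(M)$, pull back $\Sigma\XX$ along $f$ and form the twisted Dirac operator $\D_{f,0}$ using the pulled-back Levi--Civita connection. As $n$ is even, $\D_{f,0}$ splits into four chiral pieces and the Atiyah--Singer index theorem in the Lipschitz form of \cite{CHS}*{Thm.~4.8} gives, exactly as in Section~\ref{subsec.nontrivialkernel},
\[
\ind(\D_{f,0}^{++})+\ind(\D_{f,0}^{--})=(-1)^{n/2}\deg(f)\,\chi(\XX)\ne0 ,
\]
using $\chi(\XX)\ne0$; hence there is a nontrivial $\phi\in H^1(M;\Sigma M\otimes f^*\Sigma\XX)$ with $\D_{f,0}\phi=0$ lying in one of the four subbundles $\Sigma^\pm M\otimes f^*\Sigma^\pm\XX$, so that $(\omega_\C\otimes\omega_\C^0)\phi=\eps\phi$ for a fixed sign $\eps$, where $\omega_\C^0$ is now the Clifford volume element of $\Sigma\XX$.

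Using admissibility I choose an orthonormal frame $(\psi_\alpha)$ of $\Sigma\XX$ of generalized Killing spinors with $\nabla_X\psi_\alpha=\tfrac12W(X)\cdot\psi_\alpha$ (orthonormality can be arranged by a constant change of frame, since $\nabla-\tfrac12W(\cdot)\cdot$ is a metric connection). Expanding $\phi=\sum_\alpha\phi_\alpha\otimes f^*\psi_\alpha$ yields a unitary bundle isomorphism $U_+\colon\Sigma M\otimes f^*\Sigma\XX\to\Sigma M\otimes\Ctriv$, and the computation of Lemma~\ref{lem.DiracConjugate} gives $U_+\D_{f,0}U_+^{-1}=\D+\tfrac12Z$, where $Z$ is the $L^\infty$-endomorphism field built exactly as $Z^+$ in Section~\ref{sec.LipschitzTwists} but from the $1$-forms $X\mapsto\langle W(df(X))\cdot\psi_\alpha,\psi_\beta\rangle$. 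Running the computation of Lemma~\ref{lem.basicestimate} with the singular value decomposition of the linear map $W\circ d_pf\colon T_pM\to T_{f(p)}\XX$ in place of that of $d_pf$ then gives, almost everywhere,
\[
\sum_\alpha|\D\phi_\alpha|^2\le\tfrac14\,|W\circ d_pf|_1^2\,\sum_\alpha|\phi_\alpha|^2 ,
\]
with equality at a point $p$ where $W\circ d_pf$ is invertible forcing $(e_j\cdot e_k\otimes\epsilon_j\cdot\epsilon_k)\phi(p)=\phi(p)$ for all $j\ne k$, where $e_j$ and $\epsilon_j$ are the left and right singular vectors of $W\circ d_pf$. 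Since $W\ge0$, $|W\circ d_pf|_1\le|W|_1\,|d_pf|_\infty=\tr(W(f(p)))\,|d_pf|_\infty\le\max_M\tr(W)/\RadX(M)$, so integrating over $M$ and using $\sum_\alpha\|\D\phi_\alpha\|_{L^2}^2\ge\lambda_1(\D^2)\sum_\alpha\|\phi_\alpha\|_{L^2}^2$ gives \eqref{eq.DiracRadX}.

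For the rigidity statement I assume equality in \eqref{eq.DiracRadX}, normalize $\RadX(M)=1$, and use that $W>0$ almost everywhere. Then every intermediate inequality is an equality: each $\phi_\alpha$ is a $\lambda_1(\D^2)$-eigenspinor, hence smooth with nodal set of Hausdorff codimension $\ge2$ by \cite{B-Nodal}, so $\phi$ is continuous with null nodal set $\ZZ$ and $M\setminus\ZZ$ connected; and almost everywhere $|W\circ d_pf|_1=\max_M\tr(W)$. Reading this through the chain of estimates forces, almost everywhere, $|d_pf|_\infty=1$, $\tr(W(f(p)))=\max_M\tr(W)$, and $\sigma_j(W\circ d_pf)=\sigma_j(W(f(p)))$ for every $j$. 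Taking products,
\[
|\det W(f(p))|\cdot|\det d_pf|=|\det(W\circ d_pf)|=\prod_j\sigma_j(W(f(p)))=|\det W(f(p))| ,
\]
and since $W(f(p))$ is invertible for a.e.\ such $p$ this yields $\prod_j\mu_j=1$, where $\mu_1\le\dots\le\mu_n\le1$ are the singular values of $d_pf$; thus all $\mu_j=1$ and $d_pf$ is a linear isometry almost everywhere. Composing the rigidity relation $(e_je_k\otimes\epsilon_j\epsilon_k)\phi=\phi$ over the partition $\{1,2\},\{3,4\},\dots$ identifies $(\omega_\C\otimes\omega_\C^0)\phi$ with $\eta(p)\phi$ up to a fixed combinatorial sign depending only on $n$, where $\eta(p)=\operatorname{sign}\det(W\circ d_pf)=\operatorname{sign}\det(d_pf)$ (using $\det W(f(p))>0$); comparing with $(\omega_\C\otimes\omega_\C^0)\phi=\eps\phi$ shows that $\operatorname{sign}\det(d_pf)$ is constant almost everywhere. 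Hence $d_pf$ is a linear isometry with globally consistent orientation, so by Theorem~1 of \cite{KMS} the map $f$ is a smooth Riemannian covering of $\XX$ after rescaling $g_M$ to $\RadX(M)^{-2}g_M$, necessarily finite by compactness. Finally $\{\,x\in\XX:\tr(W)(x)=\max_M\tr(W)\,\}$ is closed and its $f$-preimage has full measure in $M$, hence is all of $M$; since $f$ is surjective, $\tr(W)\equiv\max_M\tr(W)$ is constant.

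The hard part is the rigidity argument. The pointwise estimate only controls $W\circ d_pf$ --- its singular values (via equality in $|W\circ d_pf|_1\le|W|_1\,|d_pf|_\infty$) and the Clifford relations $(e_je_k\otimes\epsilon_j\epsilon_k)\phi=\phi$ written in the singular frame of $W\circ d_pf$, not of $d_pf$ itself. Deducing that $d_pf$ is a constant multiple of a linear isometry depends on the determinant identity together with $W>0$, and deducing orientation consistency depends on pushing the chirality constraint coming from $\chi(\XX)\ne0$ through the composed Clifford relation; both hypotheses $\chi(\XX)\ne0$ and $W>0$ are genuinely used here. A further point to verify is that $W$ is positive definite at $f(p)$ for almost every $p$, so that the determinant step is not vacuous --- this should follow from an area-formula argument using that $d_pf$ has full rank almost everywhere under the equality assumption.
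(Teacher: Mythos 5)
Your proposal is correct and follows essentially the same route as the paper's proof of Theorem~\ref{thm.DiracRadX}: index-theoretic existence of a harmonic twisted spinor using $\chi(\XX)\ne0$, replacement of $df$ by $W\circ df$ in Lemmas~\ref{lem.DiracConjugate}, \ref{lem.basicestimate} and \ref{lem.Z+-}, the trace-norm bound $|W\circ df|_1\le\tr(W)\,|df|_\infty$, and then the chirality/orientation argument of Section~\ref{subsec.eqneven} together with \cite{KMS}*{Thm.~1}. The only real deviation is in the equality analysis: the paper isolates this in Lemma~\ref{lem.normcomposition}, whose equality case (for $W$ positive definite) directly yields that $df$ is $\Lip(f)$ times a linear isometry, whereas you reach the same conclusion via the standard majorization $\sigma_j(W\circ d_pf)\le\sigma_j(W)\,|d_pf|_\infty$ forcing $\sigma_j(W\circ d_pf)=\sigma_j(W)$ and then a determinant argument; both are valid, yours trading the paper's self-contained computation for a citation of a standard matrix inequality. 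The measure-theoretic point you flag at the end --- that one needs $W$ positive definite at $f(p)$ for almost every $p\in M$, not merely almost everywhere on $\XX$, since Lipschitz preimages of null sets need not be null --- is handled no more explicitly in the paper's proof, which applies the equality case of Lemma~\ref{lem.normcomposition} almost everywhere in exactly the same way, so it is not a gap specific to your argument (and note that your proposed fix should not presuppose that $d_pf$ has full rank a.e., which is what the determinant step is meant to establish).
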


We first prove an auxiliary lemma.
See also Lemma~B.1 in \cite{CHZ} for a similar statement.

\begin{lemma}
\label{lem.normcomposition}
Let $U$ and $V$ be $n$-dimensional Euclidean vector spaces, let $F\colon U\to V$ be linear and let $W\colon V\to V$ be symmetric and positive semidefinite.
Then
\begin{equation}
|W\circ F|_1 \le \tr(W) |F|_\infty.
\label{eq.normcomposition}
\end{equation}
If $W$ is positive definite and equality holds in \eqref{eq.normcomposition}, then $F$ is $|F|_\infty$ times an isometry.
\end{lemma}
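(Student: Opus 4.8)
The plan is to reduce to the diagonal case by choosing adapted orthonormal bases and then apply a weighted rearrangement argument. First I would diagonalize $W$: pick an orthonormal basis $v_1,\dots,v_n$ of $V$ with $Wv_i=w_i v_i$, $0\le w_1\le\dots\le w_n$, so $\tr(W)=\sum_i w_i$. The singular values of $W\circ F$ are, by definition, the square roots of the eigenvalues of $F^*W^2F$, equivalently the $|Wv|$ taken over an orthonormal system of principal vectors; the clean way to estimate $|W\circ F|_1=\sum_i \mu_i(W\circ F)$ is via the variational (Ky Fan) characterization of the trace norm, namely $|W\circ F|_1 = \max \sum_i \langle WF u_i, v_{\sigma(i)}\rangle$ over orthonormal bases $(u_i)$ of $U$ and $(v_j)$ of $V$ and permutations $\sigma$. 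Expanding $WF u_i$ in the eigenbasis of $W$ and using $|F u_i|\le |F|_\infty$ together with $|Fu_i|=\bigl(\sum_j \langle Fu_i,v_j\rangle^2\bigr)^{1/2}$ gives, after summing, the bound $\sum_i \langle WFu_i,v_{\sigma(i)}\rangle \le \sum_j w_j \, a_j$ where $a_j := \sum_i \langle Fu_i, v_j\rangle\,[\sigma(i)=j]$, hmm — this needs a cleaner bookkeeping.

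A more transparent route: use $|AB|_1\le |A|_\infty\,|B|_1$ is false in the wrong order, so instead use the dual/Hölder-type inequality $|W\circ F|_1\le |W|_? \cdot |F|_?$. The right statement is the well-known operator inequality $|AB|_1\le |A|_1\,|B|_\infty$. Applying it with $A=W$ (so $|W|_1=\tr(W)$ since $W\ge0$) and $B=F$ (so $|B|_\infty=|F|_\infty$) yields \eqref{eq.normcomposition} immediately. So the whole content of the inequality is this standard sub-multiplicativity of Schatten norms, which follows, e.g., from the singular value inequalities $\mu_{i+j-1}(AB)\le\mu_i(A)\mu_j(B)$, or more elementarily: write $F=|F|_\infty\,F_0$ with $|F_0|_\infty\le 1$; then $|W\circ F|_1=|F|_\infty|W F_0|_1$ and $|WF_0|_1=\tr(\sqrt{F_0^*W^2F_0})\le\tr(W)$ because $F_0^*W^2F_0\le W^2$ is false in general, so I must instead argue $|WF_0|_1\le|W|_1$ directly from $\mu_i(WF_0)\le\mu_i(W)\,|F_0|_\infty\le\mu_i(W)$, which is the case $j=1$ of the Horn singular value inequality. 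Summing over $i$ gives the claim. I expect \textbf{this step — citing or proving $\mu_i(AB)\le\mu_i(A)|B|_\infty$} — to be the only real point; everything else is bookkeeping.

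For the equality case, suppose $W$ is positive definite and $|W\circ F|_1=\tr(W)|F|_\infty$. Normalize $|F|_\infty=1$. From $\mu_i(WF)\le w_i\,|F|_\infty\le w_i$ for each $i$, equality of the sums forces $\mu_i(WF)=w_i$ for every $i$, so $WF$ has the same singular values as $W$, hence $|WF|_2=|W|_2$, i.e. $\tr(F^*W^2F)=\tr(W^2)$. Writing $F^*F=:P\ge 0$ with $\mu_n(P)=|F|_\infty^2=1$, we get $\tr(W^2 P) = \tr(W^2)$ on comparing, since $\tr(F^*W^2F)=\tr(W^2 FF^*)$; more directly, in the eigenbasis of $W$, $\sum_j w_j^2\,(FF^*)_{jj}=\sum_j w_j^2$ while $(FF^*)_{jj}\le |F|_\infty^2=1$ for each $j$ and $w_j^2>0$, forcing $(FF^*)_{jj}=1$ for all $j$; combined with $\|FF^*\|_\infty\le 1$ this yields $FF^*=\id_V$, so $F$ is a coisometry, and since $\dim U=\dim V$, $F$ is an isometry, i.e. $|F|_\infty=1$ times an isometry. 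Rescaling back, $F$ is $|F|_\infty$ times an isometry, as claimed. I would write the final version along these lines, choosing whichever of the two elementary arguments for $\mu_i(AB)\le\mu_i(A)\|B\|_\infty$ reads most cleanly, and refer to \cite{CHZ}*{Lem.~B.1} for the parallel statement.
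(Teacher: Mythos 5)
Your argument is correct, but it runs along a different track than the paper's. You reduce \eqref{eq.normcomposition} to the standard Schatten-norm submultiplicativity $|AB|_1\le |A|_1\,|B|_\infty$, i.e.\ to the singular value inequality $\mu_i(AB)\le \mu_i(A)\,|B|_\infty$ (with matched orderings), which indeed gives the bound in one line since $|W|_1=\tr(W)$ for $W\ge 0$; your equality discussion is also sound: equality of the sums forces $\mu_i(WF)=w_i|F|_\infty$ for every $i$, hence $\tr(F^*W^2F)=|F|_\infty^2\tr(W^2)$, and in the eigenbasis of $W$ the diagonal entries of $FF^*$ are pinched between $|F|_\infty^2$ and the value needed for the trace identity, so positive definiteness of $W$ gives $(FF^*)_{jj}=|F|_\infty^2$ for all $j$; together with $FF^*\le |F|_\infty^2\,\id_V$ and the zero-trace-of-a-PSD-operator argument this yields $FF^*=|F|_\infty^2\,\id_V$, and since $\dim U=\dim V$ the map $F$ is $|F|_\infty$ times an isometry. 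The paper instead gives a self-contained computation: it writes $W=L^2$ with $L$ the positive square root, expands $|W\circ F|_1=\sum_j\langle L(F(e_j)),L(e_j^0)\rangle$ in adapted bases, and applies a weighted AM--GM with weight $|F^*|_\infty$; the equality case is then read off from equality in that AM--GM, giving $|F^*|_\infty L(e_j^0)=L(F(e_j))$ and hence $F(e_j)=|F|_\infty e_j^0$ directly. Your route buys brevity and a cleaner structural statement, at the price of citing (or proving) the Horn-type inequality $\mu_i(AB)\le\mu_i(A)|B|_\infty$ — which you correctly flag as the one real ingredient and which does admit a one-line proof via $\mu_i(AB)=\mu_i(B^*A^*)$ and Weyl monotonicity — whereas the paper's computation needs nothing beyond Cauchy--Schwarz/AM--GM. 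Two small editorial points: dispose of the trivial case $F=0$ before normalizing $|F|_\infty=1$ (as the paper does), and cut the exploratory first paragraph, including the side remark about $|AB|_1\le|A|_\infty|B|_1$ (that inequality is in fact true; it is merely useless here).
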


\begin{proof}
If $F=0$, then inequality~\eqref{eq.normcomposition} is trivial.
So assume $F\neq 0$.
Let $e_1,\dots,e_n$ be an orthonormal basis of $U$ and $e_1^0,\dots,e_n^0$ one of $V$ such that $W(F(e_j))=\mu_j e_j^0$ where $\mu_j$ are the singular values of $W\circ F$.
Since $W$ is positive semidefinite it has a positive semidefinite symmetric square root $L\colon V\to V$.
We compute
\begin{align*}
|W\circ F|_1 
&=
\sum_j \mu_j \\
&=
\sum_j \< W(F(e_j)), e_j^0\>\\
&=
\sum_j \< L(F(e_j)), L(e_j^0)\>\\
&=
\sum_{jk} \< L(F(e_j)),e_k^0\>\<e_k^0, L(e_j^0)\>\\
&=
\sum_{jk} \frac{1}{\sqrt{|F^*|_\infty}}\< L(F(e_j)),e_k^0\>\cdot\sqrt{|F^*|_\infty}\<e_k^0, L(e_j^0)\>\\
&\le
\tfrac12\sum_{jk}\Big( \frac{1}{|F^*|_\infty}\< L(F(e_j)),e_k^0\>^2 + |F^*|_\infty\<e_k^0, L(e_j^0)\>^2\Big)\\
&=
\frac{1}{2|F^*|_\infty}\sum_{jk}\< e_j,F^*(L(e_k^0))\>^2 + \frac{|F^*|_\infty}{2}\sum_j|L(e_j^0)|^2\\
&=
\frac{1}{2|F^*|_\infty}\sum_{k}|F^*(L(e_k^0))|^2 + \frac{|F^*|_\infty}{2}|L|_2^2\\
&\le
\frac{|F^*|_\infty}{2}\sum_{k}|L(e_k^0)|^2 + \frac{|F^*|_\infty}{2}|L|_2^2\\
&=
|F^*|_\infty\cdot |L|_2^2\\
&=
|F|_\infty\cdot \tr(W).
\end{align*}
This proves \eqref{eq.normcomposition}.

Now assume that $W$ is positive definite and that equality holds in \eqref{eq.normcomposition}.
Again, we can assume $F\neq 0$.
Then, in particular, equality must hold in the first inequality of the above computation.
This means that 
$$
\sqrt{|F^*|_\infty}\<e_k^0, L(e_j^0)\> = \frac{1}{\sqrt{|F^*|_\infty}}\< L(F(e_j)),e_k^0\>
$$
for all $j$ and $k$.
Thus 
$$
|F^*|_\infty\<e_k^0, L(e_j^0)\> = \< L(F(e_j)),e_k^0\>
$$
for all $j$ and $k$ and hence
$$
|F^*|_\infty L(e_j^0) = L(F(e_j))
$$
for all $j$.
Composing both sides with $L$ yields
$$
|F^*|_\infty W(e_j^0)=W(F(e_j)) = \mu_j e_j^0.
$$
Thus, $e_1^0,\dots,e_n^0$ form an eigenbasis for $W$ for the eigenvalues $\frac{\mu_1}{|F^*|_\infty},\dots,\frac{\mu_n}{|F^*|_\infty}$.
Since $W$ is positive definite, all these eigenvalues are positive, and we can compute
\begin{align*}
F(e_j) 
&= 
W^{-1}\circ W\circ F(e_j) 
= 
\frac{|F^*|_\infty}{\mu_j}\mu_j e_j^0
=
|F^*|_\infty e_j^0
=
|F|_\infty e_j^0 .
\end{align*}
Therefore, all singular values of $F$ are equal to $|F|_\infty$ and hence $F$ is $|F|_\infty$ times an isometry.
\end{proof}

\begin{proof}[Proof of Theorem~\ref{thm.DiracRadX}]
We indicate the necessary changes to the proof of the main theorem.
We can now define the unitary vector bundle isomorphisms $U_\pm\colon \Sigma M\otimes f^*\Sigma\XX \to \Sigma M\otimes \Ctriv$ in \eqref{eq.defU+} using the generalized Killing spinors on $\XX$ instead of the Killing spinors on $\Sn$.
If we replace $df$ by $W\circ df$ in the definition of the $1$-forms $\zeta_{\alpha\beta}^{\pm}$ and in Lemmas~\ref{lem.DiracConjugate}, \ref{lem.basicestimate}, and \ref{lem.Z+-}, then these lemmas still hold true.

The first step~\ref{subsec.nontrivialkernel} of the proof of the main theorem still works because we have assumed that $\XX$ has nontrivial Euler number which insures nontriviality of the index.
Proof step~\ref{subsec.estimate} also works and yields a nontrivial tuple of spinor fields $\phi_\alpha\in H^1(M;\Sigma M)$ satisfying
\begin{equation*}
\sum_\alpha |\D\phi_\alpha|^2
\le
\tfrac14 |W\circ df|_1^2 \,\sum_\alpha |\phi_\alpha|^2 .
\end{equation*}
Lemma~\ref{lem.normcomposition} implies 
\begin{equation}
\sum_\alpha |\D\phi_\alpha|^2
\le
\tfrac14 \tr(W)^2|df|_\infty^2 \,\sum_\alpha |\phi_\alpha|^2 
\le
\tfrac14 \tr(W)^2\Lip(f)^2 \,\sum_\alpha |\phi_\alpha|^2 .
\label{eq.lambdaRadPointwiseW}
\end{equation}
This proves \eqref{eq.DiracRadX}.

Now assume that equality holds in \eqref{eq.DiracRadX} and that $W$ is almost everywhere positive definite.
Then we have equality in \eqref{eq.lambdaRadPointwiseW} and hence $|df|_{\infty}=\Lip(f)$ almost everywhere. 
From the equality case in Lemma~\ref{lem.normcomposition} we deduce that $df$ is $\Lip(f)$ times a linear isometry almost everywhere.

As in the proof of Lemma~\ref{lem.basicestimate} find that \eqref{eq.jneqkrigid} holds.
This implies $\tau_{v,w}\,\phi(p) = \Lip(f)^2\phi(p)$ for almost all $p\in M$ and orthonormal tangent vectors $v,w\in T_pM$.
Now the argument in Section~\ref{subsec.eqneven} applies and shows that $f\colon M\to \XX$ is a Riemannian covering (up to scaling by the factor $\Lip(f)$).
Finally, equality in \eqref{eq.DiracRadX} implies that $\tr(W)=\max_M\tr(W)$ is constant. 
\end{proof}

\begin{example}
Consider the situation described in Example~\ref{ex.convexdomain}, i.e., $\XX=\partial\Omega$ where $\Omega\subset\R^{n+1}$ is a convex domain with smooth boundary. 
We assume that the Weingarten map $W$ of the boundary of $\Omega$ is positive definite almost everywhere.
If we now have equality in \eqref{eq.DiracRadX}, then $\XX$ has constant mean curvature and must be a round sphere by Alexandrov's theorem.
Since the sphere is simply connected, the Riemannian covering $M\to\XX$ must be an isometry (up to scaling of the metric).
Thus, $M$ is a round sphere as well.

As an application of these considerations let $h\in C^\infty(\XX)$ be such that $\min_\XX h \ge \max_\XX \tr(W)$.
Assume that there is an NNSC spin fill-in of $(\XX,h)$.
As in Section~\ref{subsec.fillin} we find
$$
\max_\XX \tr(W) \le \min_\XX h \le 2\sqrt{\lambda_1(\D^2)}\le \frac{\max_\XX\tr(W)}{\RadX(\XX)} = \max_\XX \tr(W).
$$
Thus, we have equality in \eqref{eq.DiracRadX} and hence $\XX$ is a round sphere.
Reasoning as in Section~\ref{subsec.fillin}, we conclude that the spin fill-in is a Euclidean ball.
\end{example}

\section{Concluding remarks and questions}
\label{sec.conclusion}

We conclude with a few remarks and some related questions.

\subsection{Alternative approach to the odd-dimensional case}
There is an alternative way to prove the estimate \eqref{eq.DiracRad} for odd dimension $n$, completely avoiding the spectral flow argument in Section~\ref{subsec.proof.spectralflow}.
This alternative argument is more in the spirit of Llarull's way to treat the odd-dimensional case in \cite{Ll}.

Namely, let $\rho>0$ and put $\tilde{M} :=M\times \mathbb{S}^1(\rho)$ and where we equip $\mathbb{S}^1(\rho)$ with the spin structure for which it has a parallel spinor.
Then $\lambda_1(\tilde{\D}^2)=\lambda_1(\D^2)$ where $\tilde{\D}$ is the Dirac operator of $\tilde{M}$.

The singular values of the differential of $f\times \id\colon \tilde{M} =M\times \mathbb{S}^1(\rho) \to S^n\times \mathbb{S}^1(2)$ are same as those for $f$ plus one singular value $\frac{2}{\rho}$ coming from the $S^1$-direction.

There exists a smooth map $h\colon S^n\times S^1(2)\to S^{n+1}$ with Lipschitz constant $1$ and of degree~$1$, see Lemma~2.3 in \cite{BBHW}.
Thus, the Lipschitz map $F:=h\circ(f\times\id)\colon \tilde{M}\to S^{n+1}$ has degree $1$ and the singular values of its differential satisfy $0\le \mu_j\le \frac{1}{R}$ for $j=1,\dots,n$ and $\mu_{n+1}\le\frac{2}{\rho}$.
The discussion of the even-dimensional case now gives
\begin{align*}
\sum_\alpha \|\tilde{\D} \phi_\alpha\|_{L^2}^2
&\le
\tfrac14|dF|_1^2 \sum_\alpha \|\phi_\alpha\|_{L^2}^2 \\
&\le
\tfrac14(|df|_1+\tfrac{2}{\rho})^2 \sum_\alpha \|\phi_\alpha\|_{L^2}^2 \\
&\le
\tfrac14(\tfrac{n}{\RadSn(M)}+\tfrac{2}{\rho})^2 \sum_\alpha \|\phi_\alpha\|_{L^2}^2 .
\end{align*}
We find
$$
\lambda_1(\D^2)
=
\lambda_1(\tilde{\D}^2)
\le
\tfrac14(\tfrac{n}{\RadSn(M)}+\tfrac{2}{\rho})^2 .
$$
Taking the limit $\rho\nearrow \infty$ concludes the proof of \eqref{eq.DiracRad} for odd $n$.

\begin{question}
Can the equality discussion be carried out using this approach?
\end{question}

\subsection{A refined hyperspherical radius}

In Section~\ref{subsec.estimate} it has been shown that 
$$
\lambda_1(\D^2) \le \tfrac14 |df|_1^2 
$$
for any Lipschitz map $f\colon M\to\Sn$.
Together with $|df|_1 \le n\,\Lip(f)$ this yields the estimate~\eqref{eq.DiracRad}.

We can introduce a refined hyperspherical radius by letting $\RadSn^\tr(M)$ to be the supremum of all numbers $R>0$ such that there exists a Lipschitz map $f\colon M\to \Sn$ with $|df|_1\le n/R$ and $\deg(f)\neq0$.
Then $\RadSn^\tr(M)\ge\RadSn(M)$ and we have actually proved the potentially stronger estimate
\begin{equation}
\lambda_1(\D^2) \le \frac{n^2}{4\,\RadSn^\tr(M)^2}.
\label{eq.DiracRadRefined}
\end{equation}
\begin{question}
What is the equality case in \eqref{eq.DiracRadRefined}?
\end{question}

There may be a relation to Theorem~3 in \cite{Li}.

\subsection{\texorpdfstring{$\boldsymbol{\Lambda^2}$-contracting maps}{Λ²-contracting maps}}
Another way to modify the hyperspherical radius would be to let $\RadSn^{\Lambda^2}(M)$ be the supremum of all numbers $R>0$ such that there exists a Lipschitz map $f\colon M\to \Sn$ with $|\Lambda^2df|_\infty\le 1/R^2$ and $\deg(f)\neq0$.
Here $\Lambda^2df\colon \Lambda^2T_pM \to \Lambda^2 T_{f(p)\Sn}$ is the induced linear map on $2$-vectors and $|\cdot|_\infty$ denotes the spectral norm.
Since $|\Lambda^2df|_\infty\le|df|_\infty^2$, we have $\RadSn^{\Lambda^2}(M)\ge\RadSn(M)$.

\begin{question}
Does the estimate
$$
\lambda_1(\D^2) \le \tfrac14 |\Lambda^2 df|_\infty
$$
hold and does equality imply that $f$ is a Riemannian isometry?

If true, we would have
$$
\lambda_1(\D^2) \le \frac{n^2}{4\,\RadSn^{\Lambda^2}(M)^2}
$$
and the discussion in Section~\ref{subsec.appl.Llarull} would yield Theorem~C in \cite{Ll} which is stronger than Theorem~B.
\end{question}

\subsection{More general target spaces}
In Section~\ref{sec.X} we have discussed how one can replace $\Sn$ by more general comparison spaces $\XX$ in even dimensions.

\begin{question}
What would be a good class of admissible comparison spaces in odd dimensions for which Theorem~\ref{thm.DiracRadX} still holds?
\end{question}

\appendix

\section{Mean curvature of NNSC spin fill-ins}
\label{app.HMZ}

In this appendix we provide a proof of Theorem~6 in \cite{HMZ} for the reader's convenience as the proof in \cite{HMZ} has a gap.
There is no claim of originality.

Let $A$ be a (generally unbounded) selfadjoint operator in a Hilbert space $\HH$ with discrete spectrum.
We denote by $\lambda_1(A)$ its smallest nonnegative eigenvalue.
For any real interval $I\subset\R$ we denote by $E(A,I)\subset\HH$ the sum of all eigenspaces of $A$ to eigenvalues which lie in~$I$.

\begin{theorem}[Hijazi-Montiel-Zhang]\label{thm.HMZ}
Let $M$ be an $n$-dimensional connected closed Riemannian spin manifold.
Denote the Dirac operator acting on spinor fields of $M$ by $\D$.

Then for any NNPS fill-in $(X,h)$ of $M$ the following estimate holds:
\begin{equation}
\min_M h \le 2\sqrt{\lambda_1(\D^2)}.
\label{eq.meanDirac}
\end{equation}
In the case of equality in \eqref{eq.meanDirac}, $h$ is constant and $X$ is Ricci flat.
\end{theorem}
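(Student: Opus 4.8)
The plan is to reconstruct the boundary–value argument for the Dirac operator $\D^X$ of the fill‑in, in the spirit of Hijazi--Montiel--Zhang and the spinorial Reilly formula. First I would reduce to the case that $n$ is even by replacing $(X,M,h)$ with $(X\times\mathbb{S}^1,M\times\mathbb{S}^1,h)$, where $\mathbb{S}^1$ carries the spin structure admitting a parallel spinor: this preserves $\scal_X\ge0$, the boundary relation $h=H_X$, the spin‑structure‑preserving isometry, and $\lambda_1(\D^2)$ (cf.\ the alternative approach in Section~\ref{sec.conclusion}), while making $\dim X=n+1$ odd, so that $\Sigma X$ restricts along $M=\dX$ to $\Sigma M$. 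With $\nu$ the outward unit normal and $\D^{\dX}$ the boundary Dirac operator (which, under $\Sigma X|_{\dX}\cong\Sigma M$, has square $\D^2$), combining the Schr\"odinger--Lichnerowicz formula $(\D^X)^2=(\nabla^X)^*\nabla^X+\tfrac14\scal_X$ with integration by parts over $X$ gives the spinorial Reilly formula: for every $\Phi$ with $\D^X\Phi=0$,
\[
\int_{\dX}\langle\D^{\dX}\Phi,\Phi\rangle-\tfrac12\int_{\dX}h\,|\Phi|^2
=\int_X|\nabla^X\Phi|^2+\tfrac14\int_X\scal_X\,|\Phi|^2\ \ge\ 0,
\]
the last inequality using $\scal_X\ge0$.

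Next I set $\mu_0:=\sqrt{\lambda_1(\D^2)}\ge0$. Since $(\D^{\dX})^2=\D^2$, some $\mu\in\{\mu_0,-\mu_0\}$ is an eigenvalue of $\D^{\dX}$; assume $\mu=\mu_0$ (the symmetric argument with $-\D^{\dX}$ covers the other case and, when $\mu_0>0$, yields the stronger $\min_M h<0$). Fix $\varphi_0\neq0$ with $\D^{\dX}\varphi_0=\mu_0\varphi_0$ and consider the boundary value problem of Atiyah--Patodi--Singer type
\[
\D^X\Phi=0\ \text{ on }X,\qquad P_{\ge\mu_0}\bigl(\Phi|_{\dX}\bigr)=\varphi_0,
\]
where $P_{\ge\mu_0}$ projects onto $E(\D^{\dX},[\mu_0,\infty))$. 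Here lies the main obstacle — and the point where the argument in \cite{HMZ} is incomplete: solvability of this problem. I would handle it by a dichotomy. Using Green's formula and the fact that Clifford multiplication by $\nu$ anticommutes with $\D^{\dX}$, the cokernel is identified with $\{\Psi:\D^X\Psi=0,\ \Psi|_{\dX}\in E(\D^{\dX},(-\infty,-\mu_0])\}$. If this is nonzero, pick $0\neq\Psi$ in it; then $\langle\D^{\dX}\Psi|_{\dX},\Psi|_{\dX}\rangle\le-\mu_0\|\Psi|_{\dX}\|^2\le0$, $\Psi|_{\dX}\neq0$ by unique continuation, and the Reilly formula forces $\tfrac12(\min_M h)\|\Psi|_{\dX}\|^2\le0$, i.e.\ $\min_M h\le0\le 2\sqrt{\lambda_1(\D^2)}$. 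If the cokernel vanishes, the problem is solvable; a solution $\Phi$ is nonzero since $\varphi_0\neq0$. Writing $\Phi|_{\dX}=\varphi_0+\eta$ with $\eta:=P_{<\mu_0}(\Phi|_{\dX})$ and using that $\mu_0^2$ is the \emph{smallest} eigenvalue of $\D^2$ — so $\D^{\dX}$ has no eigenvalue in $(-\mu_0,\mu_0)$ — one gets $\eta\in E(\D^{\dX},(-\infty,-\mu_0])$, whence
\[
\langle\D^{\dX}\Phi|_{\dX},\Phi|_{\dX}\rangle
=\mu_0\|\varphi_0\|^2+\langle\D^{\dX}\eta,\eta\rangle
\le\mu_0\|\varphi_0\|^2-\mu_0\|\eta\|^2\le\mu_0\|\Phi|_{\dX}\|^2,
\]
and the Reilly formula gives $\tfrac12(\min_M h)\|\Phi|_{\dX}\|^2\le\mu_0\|\Phi|_{\dX}\|^2$, so (there is nothing to prove if $\min_M h<0$) $\min_M h\le2\mu_0=2\sqrt{\lambda_1(\D^2)}$, which is \eqref{eq.meanDirac}.

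For equality, suppose $\min_M h=2\sqrt{\lambda_1(\D^2)}$. If $\mu_0>0$ this rules out the first branch and forces equality throughout the two chains above; in particular $\int_X|\nabla^X\Phi|^2=0$, so $\Phi$ is a nontrivial parallel spinor on $X$. Hence $|\Phi|$ is a positive constant, $\int_X\scal_X|\Phi|^2=0$ gives $\scal_X\equiv0$, and the existence of a parallel spinor forces $\mathrm{Ric}_X\equiv0$, i.e.\ $X$ is Ricci flat. Moreover $\|\eta\|^2=0$, so $\Phi|_{\dX}=\varphi_0$, and equality in $\tfrac12\int_{\dX}h\,|\varphi_0|^2\ge\tfrac12(\min_M h)\|\varphi_0\|^2$ forces $H_X=\min_M h$ off the zero set of $\varphi_0$; as $\varphi_0$ is a Dirac eigenspinor this zero set is a nullset, so $h\equiv\min_M h$ is constant by continuity. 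The degenerate case $\lambda_1(\D^2)=0$ is analogous (one extends a harmonic spinor of $M$ and argues the same way). When $n$ was odd, these conclusions for $X\times\mathbb{S}^1$ pass to $X$. The remaining ingredients — the precise form and sign conventions in the Reilly formula, the Fredholm theory and cokernel computation for APS‑type boundary conditions, and unique continuation for $\D^X$ — are standard; see \cite{HMZ} and \cite{CHZ}*{Appendix~B}.
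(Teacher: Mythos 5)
Your proposal is correct in substance, but it reaches the key test spinor by a genuinely different mechanism than the paper. The paper imposes the \emph{homogeneous} generalized APS condition $\phi|_\dX\in E\big(\D^\dX,(-\infty,\lambda]\big)$ with $\lambda=\sqrt{\lambda_1(\D^2)}$ and shows this boundary value problem has \emph{positive index}, namely $\ind=\tfrac12\dim E\big(\D^\dX,[-\lambda,\lambda]\big)>0$, by combining the adjoint-boundary-condition and index-shift formulas of B\"ar--Ballmann with the symmetry of the boundary spectrum; any nontrivial kernel element then feeds into the same integrated Weitzenb\"ock (Reilly) inequality you use. You instead revive the original Hijazi--Montiel--Zhang \emph{inhomogeneous} problem $P_{\ge\mu_0}(\Phi|_\dX)=\varphi_0$ and repair precisely its weak point -- solvability -- by a cokernel dichotomy: the adjoint kernel consists of harmonic spinors with boundary values in $E\big(\D^\dX,(-\infty,-\mu_0]\big)$, and such a spinor itself yields the (stronger) bound $\min_M h\le 0$, while vanishing of the cokernel restores the HMZ argument. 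Both routes rest on the same analytic ingredients (ellipticity of APS-type conditions, regularity, unique continuation, parallel-spinor rigidity), and your equality discussion matches the paper's. The trade-offs: the paper's index argument is parity-uniform and avoids both the dichotomy and your $\mathbb{S}^1$-stabilization, whereas your reduction to an even-dimensional boundary (which does preserve $\scal\ge0$, the mean curvature, and $\lambda_1(\D^2)$, and transfers the rigidity conclusions back) spares you the $\Sigma M\oplus\Sigma M$ bookkeeping for odd $n$ at the cost of checking these invariances. Two minor points, neither a gap: your parenthetical claim that the case where only $-\mu_0$ occurs in the boundary spectrum ``yields the stronger $\min_M h<0$'' is not accurate for the corresponding cokernel branch (one only gets $\min_M h\le 2\mu_0$ there, which suffices); and in fact the issue is vacuous after your reduction, since the Dirac spectrum of an even-dimensional closed spin manifold is symmetric, so $+\mu_0$ is always an eigenvalue -- a fact the paper uses explicitly. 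Also, once $\Phi$ is parallel, $|\Phi|$ is a nonzero constant, so constancy of $h$ follows directly without appealing to the nodal-set argument for $\varphi_0$.
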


\begin{proof}
For notational simplicity we will identify $\dX$ with $M$.
Let $\D^X$ be the Dirac operator acting on sections of the spinor bundle $\Sigma X$ of $X$.
If $n$ is even, then $\Sigma X|_\dX$ can be naturally identified with the spinor bundle $\Sigma\dX=\Sigma M$ of $M$ and the intrinsic Dirac operator $\D=:\D^\dX$ of $M$ is an adapted boundary operator for $D^X$.
If $n$ is odd, then $\Sigma X|_\dX$ naturally identifies with $\Sigma\dX\oplus \Sigma\dX=\Sigma M\oplus \Sigma M$ and an adapted boundary operator is given by 
$$
\D^\dX :=
\begin{pmatrix}
\D & 0\\
0 & -\D
\end{pmatrix} \, .
$$
In both cases, the spectrum of $\D^\dX$ is symmetric about $0$ and $\lambda_1((\D^\dX)^2)=\lambda_1(\D^2)$.

We put $\lambda:=\sqrt{\lambda_1(\D^2)}$ and consider the boundary value problem
\begin{equation}
\begin{cases}
\D^X\phi = 0 \text{ on }X,&\\
\phi|_\dX \in E\big(\D^\dX,(-\infty,\lambda]\big). &
\end{cases}
\label{BVP}
\end{equation}
By Example~7.27 in \cite{BB}, this is an elliptic boundary value problem and the operator
$$
\D^X\colon \{\phi\in H^1(X;\Sigma X) \mid \phi|_\dX \in E\big(\D^\dX,(-\infty,\lambda_1(\D)]\big)\} \to L^2(X;\Sigma X)
$$
is Fredholm.
We write $\ind(\D^X,I)$ for the Fredholm index of the operator 
$$
\D^X\colon \{\phi\in H^1(X;\Sigma X) \mid \phi|_\dX \in E\big(\D^\dX,I\big)\} \to L^2(X;\Sigma X)
$$ 
whenever $I\subset\R$ is an interval giving rise to a Fredholm operator.
By Section~7.2 in \cite{BB}, the adjoint boundary condition of $\phi|_\dX \in E\big(\D^\dX,(-\infty,\lambda]\big)$ is given by $\phi|_\dX \in E\big(\D^\dX,(-\infty,-\lambda)\big)$.
Hence $\ind(\D^X,(-\infty,\lambda])= -\ind(\D^X,(-\infty,-\lambda))$.
By Corollary~8.8 in \cite{BB}, we have that $\ind(\D^X,(-\infty,\lambda])=\ind(\D^X,(-\infty,-\lambda))+\dim\Big(E\big(\D^\dX,[-\lambda,\lambda]\big)\Big)$.
Therefore, 
$$
\ind(\D^X,(-\infty,\lambda]) = \tfrac12 \dim\Big(E\big(\D^\dX,[-\lambda,\lambda]\big)\Big)>0.
$$
Thus, the boundary value problem \eqref{BVP} has a nontrivial solution $\phi$.
By Corollary~7.18 in \cite{BB}, $\phi\in C^\infty(X;\Sigma X)$.
Inserting this solution into the integrated Weitzenböck formula (see e.g.\ Equation~(27) of \cite{BB2}) yields
\begin{align*}
0 
&=
\|\D^X\phi\|_{L^2(X)}^2 \\
&=
\|\nabla^X\phi\|_{L^2(X)}^2 + \int_X \scal^X |\phi|^2 + \tfrac12 \int_M h |\phi|^2 - \big(\D^\dX \phi,\phi\big)_{L^2(M)} \\
&\ge
0 + 0 + \tfrac12 \min_M h\cdot \|\phi\|_{L^2(M)}^2 - \lambda\cdot\|\phi\|_{L^2(M)}^2 
\end{align*}
and therefore
\[
\tfrac12 \min_M h\cdot \|\phi\|_{L^2(M)}^2 \le \lambda\cdot\|\phi\|_{L^2(M)}^2 .
\]
Since $\phi$ is harmonic on $X$ and nontrivial, its restriction to the boundary is also nontrivial by the unique continuation property of Dirac operators.
Thus, 
\[
\min_M h \le 2\lambda.
\]
Now assume $\min_M h = 2\lambda$.
Then we have $\|\nabla^X\phi\|_{L^2(X)}^2=0$, hence $\phi$ is parallel.
The existence of a nontrivial parallel spinor implies that $X$ is Ricci flat.
Moreover, $\phi$ has constant length.
If $h$ were not constant, then $\int_M h |\phi|^2 > \min_M h\cdot \|\phi\|_{L^2(M)}^2$ which would lead to a contradiction.
\end{proof}


\begin{bibdiv}
\begin{biblist}

\bib{B-SpaceForms}{article}{
   author={B\"ar, Christian},
   title={The Dirac operator on space forms of positive curvature},
   journal={J. Math. Soc. Japan},
   volume={48},
   date={1996},
   number={1},
   pages={69--83},
   issn={0025-5645},
   doi={10.2969/jmsj/04810069},
}

\bib{B-harmonic}{article}{
   author={B\"ar, Christian},
   title={Metrics with harmonic spinors},
   journal={Geom. Funct. Anal.},
   volume={6},
   date={1996},
   number={6},
   pages={899--942},
   issn={1016-443X},
   doi={10.1007/BF02246994},
}

\bib{B-Nodal}{article}{
   author={B\"ar, Christian},
   title={On nodal sets for Dirac and Laplace operators},
   journal={Commun. Math. Phys.},
   volume={188},
   date={1997},
   number={3},
   pages={709--721},
   issn={0010-3616},
   doi={10.1007/s002200050184},
}

\bib{BB}{article}{
   author={B\"{a}r, Christian},
   author={Ballmann, Werner},
   title={Boundary value problems for elliptic differential operators of first order},
   book={
      series={Surv. Differ. Geom.},
      volume={17},
      publisher={Int. Press, Boston, MA},
   },
   isbn={978-1-57146-237-4},
   date={2012},
   pages={1--78},
   doi={10.4310/SDG.2012.v17.n1.a1},
}

\bib{BB2}{article}{
     author={B\"{a}r, Christian},
     author={Ballmann, Werner},
     title={Guide to elliptic boundary value problems for Dirac-type operators},
     book={
         series={Progr. Math.},
         volume={319},
         publisher={Birkh\"{a}user/Springer, Cham},
         },
     date={2016},
     pages={43--80},
     review={\MR{3618047}},
     doi={10.1007/978-3-319-43648-7_3},
}

\bib{BBHW}{article}{
   author={B\"ar, Christian},
   author={Brendle, Simon},
   author={Hanke, Bernhard},
   author={Wang, Yipeng},
   title={Scalar curvature rigidity of warped product metrics},
   journal={SIGMA Symmetry Integrability Geom. Methods Appl.},
   volume={20},
   date={2024},
   pages={Paper No. 035, 26},
   doi={10.3842/SIGMA.2024.035},
}

\bib{BGM}{article}{
   author={Brady, Zarathustra},
   author={Guth, Larry},
   author={Manin, Fedor},
   title={A hardness of approximation result in metric geometry},
   journal={Selecta Math. (N.S.)},
   volume={26},
   date={2020},
   number={4},
   pages={Paper No. 54, 20},
   issn={1022-1824},
   doi={10.1007/s00029-020-00585-3},
}

\bib{CHS}{arxiv}{
      title={Lipschitz rigidity for scalar curvature}, 
      author={Cecchini, Simone},
      author={Hanke, Bernhard},
      author={Schick, Thomas},
      year={2023},
      url={\url{https://doi.org/10.48550/arXiv.2206.11796}}, 
}

\bib{CHZ}{arxiv}{
      title={Rigidity of spin fill-ins with non-negative scalar curvature}, 
      author={Cecchini, Simone},
      author={Hirsch, Sven},
      author={Zeidler, Rudolf},
      year={2024},
      url={\url{https://doi.org/10.48550/arXiv.2404.17533}}, 
}

\bib{F}{article}{
   author={Friedrich, T.},
   title={Der erste Eigenwert des Dirac-Operators einer kompakten, Riemannschen Mannigfaltigkeit nichtnegativer Skalarkr\"ummung},
   language={German},
   journal={Math. Nachr.},
   volume={97},
   date={1980},
   pages={117--146},
   issn={0025-584X},
   doi={10.1002/mana.19800970111},
}

\bib{G}{article}{
   author={Getzler, Ezra},
   title={The odd Chern character in cyclic homology and spectral flow},
   journal={Topology},
   volume={32},
   date={1993},
   number={3},
   pages={489--507},
   issn={0040-9383},
   doi={10.1016/0040-9383(93)90002-D},
}

\bib{G4}{article}{
   author={Gromov, Misha},
   title={Four lectures on scalar curvature},
   conference={
      title={Perspectives in scalar curvature. Vol. 1},
   },
   book={
      publisher={World Sci. Publ., Hackensack, NJ},
   },
   isbn={978-981-124-998-3},
   isbn={978-981-124-935-8},
   isbn={978-981-124-936-5},
   date={2023},
   pages={1--514},
}

\bib{GL}{article}{
   author={Gromov, Mikhael},
   author={Lawson, H. Blaine},
   title={Spin and scalar curvature in the presence of a fundamental group. I},
   journal={Ann. of Math. (2)},
   volume={111},
   date={1980},
   number={2},
   pages={209--230},
   issn={0003-486X},
   doi={10.2307/1971198},
}

\bib{H}{article}{
   author={Hijazi, Oussama},
   title={Premi\`ere valeur propre de l'op\'erateur de Dirac et nombre de Yamabe},
   language={French, with English summary},
   journal={C. R. Acad. Sci. Paris S\'er. I Math.},
   volume={313},
   date={1991},
   number={12},
   pages={865--868},
   issn={0764-4442},
}

\bib{HMZ}{article}{
   author={Hijazi, Oussama},
   author={Montiel, Sebasti\'an},
   author={Zhang, Xiao},
   title={Dirac operator on embedded hypersurfaces},
   journal={Math. Res. Lett.},
   volume={8},
   date={2001},
   number={1-2},
   pages={195--208},
   issn={1073-2780},
   doi={10.4310/MRL.2001.v8.n2.a8},
}


\bib{K1}{article}{
   author={Kirchberg, K.-D.},
   title={An estimation for the first eigenvalue of the Dirac operator on closed K\"ahler manifolds of positive scalar curvature},
   journal={Ann. Global Anal. Geom.},
   volume={4},
   date={1986},
   number={3},
   pages={291--325},
   issn={0232-704X},
   doi={10.1007/BF00128050},
}

\bib{K2}{article}{
   author={Kirchberg, K.-D.},
   title={The first eigenvalue of the Dirac operator on K\"ahler manifolds},
   journal={J. Geom. Phys.},
   volume={7},
   date={1990},
   number={4},
   pages={449--468 (1991)},
   issn={0393-0440},
   doi={10.1016/0393-0440(90)90001-J},
}

\bib{K3}{article}{
   author={Kirchberg, K.-D.},
   title={Eigenvalue estimates for the Dirac operator on K\"ahler-Einstein manifolds of even complex dimension},
   journal={Ann. Global Anal. Geom.},
   volume={38},
   date={2010},
   number={3},
   pages={273--284},
   issn={0232-704X},
   doi={10.1007/s10455-010-9212-6},
}

\bib{KSW}{article}{
   author={Kramer, W.},
   author={Semmelmann, U.},
   author={Weingart, G.},
   title={Eigenvalue estimates for the Dirac operator on quaternionic K\"ahler manifolds},
   journal={Math. Z.},
   volume={230},
   date={1999},
   number={4},
   pages={727--751},
   issn={0025-5874},
   doi={10.1007/PL00004715},
}

\bib{KMS}{article}{
   author={Kupferman, Raz},
   author={Maor, Cy},
   author={Shachar, Asaf},
   title={Reshetnyak rigidity for Riemannian manifolds},
   journal={Arch. Ration. Mech. Anal.},
   volume={231},
   date={2019},
   number={1},
   pages={367--408},
   issn={0003-9527},
   doi={10.1007/s00205-018-1282-9},
}

\bib{LT}{arxiv}{
   title={Rigidity of Lipschitz map using harmonic map heat flow}, 
   author={Lee, Man-Chun},
   author={Tam, Luen-Fai},
   year={2022},
   url={\url{https://doi.org/10.48550/arXiv.2207.11017}}, 
}

\bib{LSW}{article}{
   author={Li, Yihan},
   author={Su, Guangxiang},
   author={Wang, Xiangsheng},
   title={Spectral flow, Llarull's rigidity theorem in odd dimensions and its generalization},
   journal={Sci. China Math.},
   volume={67},
   date={2024},
   number={5},
   pages={1103--1114},
   issn={1674-7283},
   doi={10.1007/s11425-023-2138-5},
}

\bib{Li}{arxiv}{
   title={Scalar curvature on compact symmetric spaces}, 
   author={Listing, Mario},
   year={2010},
   url={\url{https://doi.org/10.48550/arXiv.1007.1832}}, 
}

\bib{Ll}{article}{
   author={Llarull, Marcelo},
   title={Sharp estimates and the Dirac operator},
   journal={Math. Ann.},
   volume={310},
   date={1998},
   number={1},
   pages={55--71},
   issn={0025-5831},
   doi={10.1007/s002080050136},
}


\bib{MO}{article}{
   author={Moroianu, Andrei},
   author={Ornea, Liviu},
   title={Eigenvalue estimates for the Dirac operator and harmonic 1-forms of constant length},
   language={English, with English and French summaries},
   journal={C. R. Math. Acad. Sci. Paris},
   volume={338},
   date={2004},
   number={7},
   pages={561--564},
   issn={1631-073X},
   doi={10.1016/j.crma.2004.01.030},
}

\bib{ST}{article}{
   author={Shi, Yuguang},
   author={Tam, Luen-Fai},
   title={Positive mass theorem and the boundary behaviors of compact manifolds with nonnegative scalar curvature},
   journal={J. Diff. Geom.},
   volume={62},
   date={2002},
   number={1},
   pages={79--125},
   issn={0022-040X},
}


\end{biblist}
\end{bibdiv}

\end{document}